\theoremstyle{plain}
\newtheorem{theorem}{Theorem}[subsection]
\newtheorem{theorem*}{Theorem}
\newtheorem{corollary}[theorem]{Corollary}
\newtheorem{lemma}[theorem]{Lemma}
\newtheorem{proposition}[theorem]{Proposition}
\newtheorem{definition}[theorem]{Definition}
\theoremstyle{definition}
\newtheorem{example}[theorem]{Example}
\newtheorem{remark}[theorem]{Remark}
\newcommand{\sym}{\mathrm{Sym}}
\newcommand{\eat}[1]{}
\let\hom\relax
\let\sym\relax
\let\lim\relax
\DeclareMathOperator{\hom}{Hom}
\DeclareMathOperator{\eendo}{End}
\DeclareMathOperator{\sym}{Sym}
\DeclareMathOperator{\Sch}{Sch}
\DeclareMathOperator{\XRepG}{Rep\Gamma}
\DeclareMathOperator{\xcolim}{colim}
\DeclareMathOperator{\xlim}{lim}
\DeclareMathOperator{\Hi}{H}
\newcommand{\RepG}{\text{Rep}\Gamma^{d}_{k}}
\newcommand{\intP}{\, \underline{\otimes} \,}
\newcommand{\nonrep}[2]{\underset{\Gamma^{d,#1} \rightarrow #2}{\xcolim}}
\newcommand{\higheri}[1]{\Hi^{#1}}
\begin{document}
\setcounter{page}{0}
\pagenumbering{arabic}

\title{Relating tensor structures on representations of general linear and symmetric groups}
\author{
Upendra Kulkarni\thanks{Chennai Mathematical Institute, Chennai, India.
({\tt upendra@cmi.ac.in}).
}
\and
 Shraddha Srivastava\thanks{Chennai Mathematical Institute, Chennai, India.
({\tt shraddha@cmi.ac.in})}
\and
K. V. Subrahmanyam \thanks{Chennai Mathematical Institute, Chennai, India.
({\tt kv@cmi.ac.in}).}
 }
\date{\today}

\maketitle


\begin{abstract}

For polynomial representations of $GL_n$ of a {\it fixed} degree, H.~Krause defined a new ``internal tensor product"
using the language of strict polynomial functors.
We show that over an arbitrary commutative base ring $k$, 
the Schur functor carries this internal tensor product to the usual Kronecker tensor product of symmetric group representations.
This is true even at the level of derived categories. 
The new tensor product is a substantial enrichment of the Kronecker tensor product.
E.g. in modular representation theory it brings in homological phenomena not visible on the symmetric group side.
We calculate the internal tensor product over any $k$ in several interesting cases involving classical functors and the Weyl functors.
We show an application to the Kronecker problem in characteristic zero when one partition has two rows or is a hook.

\end{abstract}


\section{Introduction}\label{sec:intro}

It has been recognized since the fundamental work of Issai Schur that
the representation theory of the symmetric group $S_d$ over a field $k$ is intimately connected with
the algebraic representation theory of the general linear group $GL_n(k)$.
This bridge has been used very fruitfully to investigate the representation theory of one of the two groups via knowledge of the other.  

\medskip

The passage from representations of $GL_{n}(k)$ to $kS_d$-modules is given by the
Schur functor (denoted by $\Sch$), via Schur-Weyl duality~\cite{Green80}. This duality between $GL_n(k)$ and $S_d$
is realized by the action of $GL_n(k) \times S_d$ on $({k^n})^{\otimes d}$.
Specifically, given a polynomial representation $M$ of $GL_n(k)$  of degree $d$, we obtain
the $kS_d$-module $\Sch(M) := \text{Hom}_{GL_n(k)} (({k^n})^{\otimes d}, M)$. 
We have the functor $\Sch$ for every pair $(n,d)$ of positive integers.
Under this functor, the tensor product of $GL_{n}(k)$-modules behaves as follows.
Given polynomial representations $M$ and $N$ of $GL_{n}(k)$ of degree $d$ and $e$ respectively, 
$M \otimes N$ is a polynomial representation of degree $d+e$. We have 
$$\Sch(M \otimes N) \simeq \text{Ind}_{S_d \times S_e}^{S_{d+e}} \Sch(M) \boxtimes \Sch(N).$$
A natural question arises: what corresponds to the (Kronecker) tensor product of $kS_d$-modules? 
To formulate this precisely, for the moment let $k$ be a field of characteristic 0 and $n \geq d$.
In this case $\Sch$ is well known to be an equivalence, 
so given polynomial representations $M$ and $N$ of $GL_n(k)$, both of degree $d$, we must have $\Sch(M \ ? \ N) \simeq \Sch(M) \otimes \Sch(N)$. 
Of course, $(M \ ? \ N)$ must be the polynomial representation whose formal character is the Kronecker product of the characters of $M$ and $N$. 
However, one would like to express the answer functorially in terms of $M$ and $N$. 
We show that the answer, which is valid even over an arbitrary commutative ring $k$, is $? = \intP$, 
an internal tensor product introduced recently by Krause in~\cite{Krause13}.

\medskip 

The operation $\intP$ and related prior ideas that inspired it have already proved pertinent in at least two other contexts:
Koszul duality~\cite{Chalupnik08, Touze13, Krause13} and derivatives of non-additive functors in algebraic topology, 
e.g. homology of certain Eilenberg-MacLane spaces~\cite{Touze13}.
Our results suggest that the ideas introduced by Cha\l{}upnik, Touz\'e and Krause should be valuable for representation theory of $S_d$ as well.

\medskip

The natural setting to define $\intP$ is the category of strict polynomial functors. 
It is well known that polynomial representations of $GL(V)$ such as symmetric and exterior powers of $V$
and more generally Schur and Weyl modules~\cite{ABW} are functorial in $V$. 
The category of strict polynomial functors of degree $d$ over a field $k$ was defined  by Friedlander and Suslin~\cite{FS97}. 
(The formulation requires care due to the difference between polynomials and polynomial functions over finite fields.)
They showed that for $n \geq d$,  this category is equivalent to the category $\text{Pol}_d(k^n)$ of polynomial representations
of degree $d$ of the general linear group scheme $GL({k^n})$ ~\cite[Theorem 3.2]{FS97}.
In his recent work on Koszul duality~\cite{Krause13} Krause generalized this result to an arbitrary commutative base ring 
by constructing the corresponding category $\RepG$ over any such ring $k$. 
(The notation $\RepG$ is explained in Section~\ref{sec:back} along with other background material. 
It is based on the fundamental role played by the divided power operation $\Gamma^d$ in defining this functor category.)
Krause also realized that as a functor category, $\RepG$ carries a tensor product $\intP$ via Day convolution. 
The Schur functor now takes the form $\Sch := \hom_{\RepG}(\otimes^d, -)$. 
We can now state our main results.

\medskip

(1) Our first basic result (Theorem~\ref{thm:main}) states that for strict polynomial functors $X,Y$ of degree $d$, there are natural isomorphisms 
$$\Sch(X \intP Y)\simeq\Sch(X)\otimes \Sch(Y)
\footnote
{While we were writing the first version of this paper, Aquilino and Reischuk uploaded a 
preprint~\cite{AR15} that proved this isomorphism and also calculated $\Gamma^\lambda \intP \Gamma^\mu$ via different arguments. 
Another independent proof of this isomorphism was given by Touz\'e~\cite{Touze15}. 
We thank these authors for discussion of their work with one of us (S.S.).
}  
\qquad {\rm and} \qquad
\Sch(\Bbb H (X,Y))\simeq\hom(\Sch(X), \Sch(Y)).$$
Here $\otimes$ and $\hom$ on the right hand side are the usual internal  
operations in the category of $kS_d$-modules and $\Bbb H$ is an internal Hom on strict polynomial functors. 
$\Bbb H$ satisfies the usual tensor-hom adjunction with respect to the internal tensor product $\intP$ 
and was in fact defined by Touz\'e~\cite{Touze14} before $\intP$. 
The Schur functor has both adjoints. We show in Corollary~\ref{cor:adjointinternal} that
Theorem~\ref{thm:main} formally implies a certain compatibility of each adjoint with one of the two internal structures, 
giving somewhat stronger versions of results obtained first in~\cite{Rei16} via different arguments.

\medskip

(2) Possibly more interesting than the above results is the fact that all of them continue to be valid at the derived level, 
as we show in Theorem~\ref{thm:intderived} and Corollary~\ref{cor:derivedadjointinternal}.
To see the relevance, let $k$ be a field of positive characteristic $p$. 
Now $\otimes$ and $\hom$ are exact for $kS_d$-modules. 
But the internal tensor $\intP$ and internal hom $\Bbb H$ are not and hence yield derived functors $\overset {\bf L} \intP$ and ${\bf R}\Bbb H$. 
Thus the closed monoidal structure on the symmetric group side lifts to a much richer structure on the general linear group side, 
the study of which is likely to be valuable for representation theory of either group. 
As an example of what one might get by such considerations, 
we observe in Corollary~\ref{cor:restricted} that for any $X, Y$ in $\RepG$ and $i \neq 0$ we have
$\Sch( H^i(X \overset{\bf L}\intP Y)) = 0$. 
Hence the highest weights of all composition factors of  such $H^i(X \overset{\bf L}\intP Y)$ must be non $p$-restricted.
In another direction, the Schur functor is always exact but its adjoints in general are not. 
It is well known that the derived functors of the adjoints of $\Sch$ contain valuable information in relating modular representation theories of $GL_n(k)$ and $S_d$~\cite{DEN}. 
Compatibility of these adjoints with the internal structure at the derived level further makes the case for relevance of
$\intP$ and $\Bbb H$ in modular representation theory. 

\medskip

We calculate $\intP$ for several classes of examples. 

\medskip

(3) {\it Classical exponential functors}. 
The first calculations involve the ``exponential" functors symmetric power ($\sym^d$), exterior power ($\wedge^d$) and divided power ($\Gamma^d$), 
the last being dual to $\sym^d$, see Section~\ref{subsec:div}. 
More generally, for a sequence $\lambda = \lambda_1, \ldots, \lambda_n$ of non-negative integers adding up to $d$, we have 
functors $\Gamma^\lambda := \Gamma^{\lambda_1} \otimes \ldots \otimes \Gamma^{\lambda_n}$ and likewise $\sym^\lambda$ and $\wedge^\lambda$.
Functors of the type $\Gamma^\lambda$ furnish a convenient projective generator for the category $\RepG$, 
so we first describe $X \intP \Gamma^\lambda$ for a general $X$ (Lemma~\ref{lm:paramet}). 
The answer is described as a weight space construction, reminiscent of the polarization operation in symbolic invariant theory.
This leads to a calculation of $\Gamma^\lambda \intP \Gamma^\mu$ (Proposition~\ref{prop:lambdamu}, also independently obtained in~\cite{AR15}).
It turns out that for any combination $A,B$ of exponential functors,  $A^\lambda \intP B^\mu$ is a direct sum of a single type of exponential functors 
over an indexing set defined combinatorially in terms of $\lambda$ and $\mu$ (Corollary~\ref{cor:symandwedge}). 
Notably the answers do not depend on the ground ring $k$ (except for the type $\wedge \intP \sym$ where it matters whether 2 is nonzero/nonunit in $k$,
see Remark~\ref{2problem}). 

\medskip

(4) {\it Weyl functors}. 
Weyl modules are universal highest weight modules in representation theory of $GL_n(k)$. 
The corresponding functors are  $\Delta(\mu)$, where $\mu$ is a partition of $d$.
We show that $\Delta(\mu) \intP \Gamma^{\lambda}$ has an explicitly described Weyl filtration (Proposition~\ref{prop:weyl}) that is independent of $k$.
Parallel results follow for dual Weyl functors using Koszul duality and for (dual) Specht modules using the Schur functor 
(Corollaries~\ref{cor:koz},~\ref{cor:nablasym} and~\ref{cor:specht}).
The internal tensor product of two Weyl functors $\Delta(\mu) \intP \Delta(\lambda)$ need not have a Weyl filtration, in contrast with their ordinary tensor product. 
But we show that their higher derived internal tensor products do vanish (Proposition~\ref{prop:twoweyl}). This is not true for two dual Weyl functors.

\medskip
 
(5) {\it Kronecker problem in characteristic $0$}. When $k$ is a field of characteristics $0$, calculating $\Delta(\mu) \intP \Delta(\lambda)$
is equivalent to the well known and difficult Kronecker problem. In Section~\ref{subsec:kron} we sketch an (impractical) algorithm 
described in terms of $\intP$. A more practical procedure can be devised when one of the partitions involved is a hook or 
has two rows and we spell out these cases. For the hook case we use the language of spin polynomial functors~\cite{Axt13}. 

\medskip

Our proofs consistently exploit the description of representations as functors, which allows the use of standard category theoretic tools such as the Yoneda lemma in various manifestations and limits/colimits. 
Parametrization of functors is another very useful idea, introduced in generality by Touz\'e. 
The equivalence of the category of strict polynomial functors with that of Schur algebra representations is also convenient, as it sometimes permits one to check facts by evaluating functors on a single object.
As in~\cite{Krause13}, we use unbounded derived categories for deriving functors.

\medskip

{\bf Acknowledgements}: 
S.S. was supported by a research fellowship from the National Board of Higher Mathematics. 
All three authors were partially supported by a grant from the Infosys foundation. 
During the course of this work,  S.S. had the opportunity to visit Henning Krause, Antoine Touz\'e and Wilberd van der Kallen. She thanks them for warm hospitality. 
She also thanks them, Karin Erdmann and Oded Yacobi for encouragment and fruitful discussions. 


\section{Strict polynomial functors and internal tensor product} \label{sec:back}

Unless mentioned otherwise, we will work over an arbitrary commutative ring $k$. 
Unadorned $\otimes$ and hom will denote these operations over $k$ and $^*$ will denote the $k$-linear dual. 
For a $k$-algebra $A$,  let $A$-Mod and Mod-$A$ respectively denote the categories of left and right $A$-modules. 

\medskip

We present here a streamlined development of relevant background, 
starting with polynomial representations of the general linear group scheme $GL(k^n)$ as motivation~(\ref{polyrep}). 
We formulate strict polynomial functors~(\ref{subsec:strict}) based on the divided power category~(\ref{subsec:div}) and 
recall important examples and constructions~(\ref{subsec:examples}) including the new internal 
hom and tensor structures due to Touz\'e and Krause~(\ref{subsec:inthom}). 
We list several consequences of the very useful Yoneda lemma that we need~(\ref{subsec:yoneda}). 
We recall the relation with the symmetric group $S_d$ on $d$ letters via the Schur functor~(\ref{subsec:schfun}). 
For completeness we have also included material on adjoints of the Schur functor~(\ref{subsec:schfun}) and on duality~(\ref{subsec:duality})  
even though these notions are used in only one or two places.
Our main reference is~\cite{Krause13}. 


\subsection{Polynomial representations and the Schur algebra}
\label{polyrep}

A representation $M$ of the $k$-group scheme $GL(k^n)$ is a comodule $M$ 
over the Hopf algebra $k[GL(k^n)] = \sym ((\eendo(k^n))^*) [det^{-1}]$, see~\cite[I.2.8]{Jantzen}.
We call $M$ a polynomial represenation (respectively, one of degree $d$) if the image of coaction map $\Delta_M: M \to M \otimes k[GL(k^n)]$ satisfies
\begin{center}
$\Delta_M(M) \subset M \otimes \sym (\eendo(k^n))^*$  (respectively, $\Delta_M(M) \subset M \otimes \sym^d (\eendo(k^n))^*$).
\end{center}
Thus the category $\text{Pol}_d(k^n)$ of polynomial representations of $GL_n(k)$ of degree $d$ is equivalent to 
the category of comodules over the coalgebra $\sym^d (\eendo(k^n))^*$ and hence to 
the category of modules over the dual algebra $(\sym^d (\eendo(k^n))^*)^*$. 
This is the Schur algebra $S_k(n,d)$, see~\cite{Green80}. 

\medskip

Restricting the action of $GL(k^n)$ on a representation $M$ to the diagonal subgroup scheme, 
we have the weight space decomposition of $M$~\cite[I.2.11]{Jantzen}.  
Clearly, each weight of an $S_k(n,d)$-module is a sequence $\lambda= \ (\lambda_{1},\ldots,\lambda_{n})$ 
of $n$ {\it non-negative} integers with $\sum \lambda_{i}= d$. 
We call such $\lambda$ the polynomial weights of degree $d$ and denote their set by $\Lambda(n, d)$. 
A weight $\lambda=(\lambda_1,\lambda_2,\ldots,\lambda_n)$ in $\Lambda(n, d)$ is dominant if it is a partition, i.e. 
if $\lambda_1 \geq \lambda_2 \geq \cdots \geq \lambda_n \geq 0$. 

\medskip

Corresponding to each partition $\lambda$, we have the Weyl module $\Delta(\lambda)$ and the dual Weyl module $\nabla(\lambda)$. 
These modules are free of finite rank over $k$ and their constructions are compatible with base change, so we omit the base ring $k$ in the notation. 
We have also suppressed the dependence on $n$ due to functoriality of these constructions discussed below in \ref{subsec:examples}.
The category $S_k(n,d)$-Mod has many good homological properties.  We recall below one that we need, see Proposition~\ref{prop:vanishing}.


\subsection{Divided power category}
\label{subsec:div}

Let $P_{k}$ denote the full subcategory of $k$-Mod with objects as finitely generated projective $k$-modules. 
Note that $P_{k}$ is closed under taking tensor products  and $k$-linear hom. 
Hence it is a closed monoidal category with the  usual hom-tensor adjunction
$$\hom_{P_{k}}(V \otimes W,  U)\simeq \hom_{P_{k}}(V,  \hom_{k}(W,  U)).$$

For $V$ in $P_k$, consider the $S_d$-action on $V^{\otimes{d}}$ by permuting the tensor factors.
The $d$-th divided power of $V$ is the module of invariants of this action, i.e., the $k$-module defined by  
\begin{center}
$\Gamma^{d}(V):= \ (V^{\otimes{d}})^{S_{d}}.$ 
\end{center}
For a group $G$ and a $kG$-module $M$, we have $(M^*)^G \simeq (M_G)^*$, where $M_G$ are the coinvariants.
For $G = S_d$ and $M = (V^*)^{\otimes d}$ with $V$ in $P_k$, we get $\Gamma^{d}(V) \simeq (M^*)^G \simeq (M_G)^*  = ( \sym^d(V^*))^*$.
Thus we have another interpretation of the Schur algebra:
$$S_k(n,d) \simeq  \Gamma^{d}(\text{Hom} (k^n, k^n)) \simeq \hom_{S_d} (({k^n})^{\otimes{d}}, ({k^n})^{\otimes{d}}).$$
This makes the following definition of a ``multi-object version" of the Schur algebra quite natural. 
The {\it divided power category}, $\Gamma^d{P_k}$, has the same objects as $P_k$ and morphisms defined by
\begin{center}
$\hom_{\Gamma^{d}P_{k}}(V, W) := \Gamma^{d}(\hom_{k}(V, W)) \simeq \hom_{S_d} (V^{\otimes{d}}, W^{\otimes{d}}).$
\end{center}
with the obvious composition using the last description. 
Thus $\Gamma^{d}P_{k}$ is equivalent to the full subcategory of $kS_d$-Mod 
consisting of objects of the form $V^{\otimes{d}}$ where $V$ is in $P_k$. 

 \medskip

Note that $\Gamma^{d}P_{k}$ inherits a duality and closed monoidal structure from the corresponding structures on 
$P_{k}$ or, equivalently, from $kS_d$-Mod.


\subsection{Strict polynomial functors}
\label{subsec:strict}

Strict polynomial functors over fields were introduced by Friedlander and Suslin~\cite{FS97}. 
An equivalent description based on divided powers was given by Pirashvili~\cite{Pirashvili}, 
with antecendents in earlier work by Bousfield~\cite{Bousfield} on homogeneous functors. 
This description was generalized to arbitrary commutative rings by Krause~\cite{Krause13} by using the category $\Gamma^d P_k$, see below. 
The idea of allowing finitely generated projective modules as arguments of polynomial functors seems to go back at least to~\cite{Kouwenhoven}.

\medskip

The category of strict polynomial functors of degree $d$, denoted by $\RepG$, is the category of $k$-linear representations of the category $\Gamma^d{P_k}$.
Thus objects of $\RepG$ are 
\begin{center}
$k$-linear covariant functors: $\Gamma^{d}P_{k} \ \to \ k$-Mod
\end{center}
and morphisms are the natural transformations between such functors. 
Being a functor category with an abelian target,  
$\RepG$ is also an abelian category in which (co)kernels, images, direct sums/products are calculated pointwise.  
As $k$-Mod has arbitrary (set-indexed) (co)products and hence (co)limits, so does $\RepG$.

\medskip

Since $\eendo_{\Gamma^{d}P_{k}} (k^n) = S_k(n,d)$,  we have the evaluation functor $ev_{k^{n}}: \RepG \to S_k(n,d)$-Mod taking $X \mapsto X(k^n)$. 
In fact, we have the following result~\cite[Theorem 3.2]{FS97},~\cite[Theorem 2.10]{Krause13}. 
The argument is sketched below in Section~\ref{subsec:yoneda}, item~\ref{item:FSproof}.

\begin{proposition}\label{thm:FS}
If $n \geq d$ then $ev_{k^{n}}:\XRepG^{d}_{k}\rightarrow S_{k}(n,d)${\rm-Mod}
is an equivalence of categories.
\end{proposition}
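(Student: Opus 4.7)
The approach is Morita-theoretic: I will construct a left adjoint $L_{n}$ to $ev_{k^{n}}$ as a Kan extension, observe that the counit is tautologically an isomorphism, and then reduce the invertibility of the unit to the concrete claim that $(k^{n})^{\otimes d}$ is a progenerator, in a suitable sense, of $kS_{d}$-Mod, which will turn out to hold precisely when $n \ge d$.

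Concretely, I would define $L_{n} : S_{k}(n,d)\text{-Mod} \to \RepG$ by the pointwise formula
$$L_{n}(M)(V) \;:=\; \hom_{\Gamma^{d}P_{k}}(k^{n}, V) \otimes_{S_{k}(n,d)} M,$$
which is the left Kan extension along $\{k^{n}\} \hookrightarrow \Gamma^{d}P_{k}$ and so is automatically left adjoint to $ev_{k^{n}}$. The counit at $M$ evaluates to $S_{k}(n,d) \otimes_{S_{k}(n,d)} M \to M$ and is an isomorphism for every $n$, showing that $ev_{k^{n}}$ is essentially surjective and that $L_{n}$ is fully faithful. What remains is to prove that the unit $\eta_{X} : X \to L_{n}(ev_{k^{n}}(X))$ is an isomorphism for every $X \in \RepG$ as soon as $n \ge d$. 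Both sides of $\eta_{X}$ preserve colimits in $X$, and the Yoneda lemma (to be developed in Section~\ref{subsec:yoneda}) shows that any $X$ is a cokernel of a morphism between direct sums of projective representables $\hom_{\Gamma^{d}P_{k}}(V,-)$. It therefore suffices to verify $\eta_{X}$ for $X = \hom_{\Gamma^{d}P_{k}}(V,-)$, in which case evaluation at $W$ turns $\eta_{X}$ into the composition pairing
$$\hom_{\Gamma^{d}P_{k}}(k^{n}, W) \otimes_{S_{k}(n,d)} \hom_{\Gamma^{d}P_{k}}(V, k^{n}) \to \hom_{\Gamma^{d}P_{k}}(V, W).$$

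The main obstacle is to show that this pairing is bijective for arbitrary $V, W \in P_{k}$ whenever $n \ge d$. Rewriting morphisms via the identification $\hom_{\Gamma^{d}P_{k}}(A, B) \simeq \hom_{kS_{d}}(A^{\otimes d}, B^{\otimes d})$ from Section~\ref{subsec:div}, the statement becomes a Morita-type claim about the $kS_{d}$-module $M_{n} := (k^{n})^{\otimes d}$. I would first reduce to free $V$ and $W$ (using that projectives are retracts of free modules and that $(-)^{\otimes d}$ preserves retracts) and then show that every $(k^{m})^{\otimes d}$ is a $kS_{d}$-summand of a finite direct sum of copies of $M_{n}$. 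The latter is the real content and follows from a purely combinatorial, characteristic-free decomposition: the standard basis tensors of $(k^{m})^{\otimes d}$ break into $S_{d}$-orbits, each of which uses at most $d$ distinct coordinates of $k^{m}$ and therefore already appears as a summand of $M_{n}$ as soon as $n \ge d$. With this progenerator property in hand, standard Morita reasoning turns the composition pairing into an isomorphism and completes the proof.
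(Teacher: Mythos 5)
Your proof is correct, and its combinatorial heart coincides with the paper's: the observation that every $S_d$-orbit of basis tensors in $(k^m)^{\otimes d}$ involves at most $d$ distinct coordinates, hence already occurs inside $(k^n)^{\otimes d}$ once $n\ge d$, is precisely the Yoneda-dual of the paper's decomposition $\Gamma^{d,k^m}\simeq\bigoplus_{\lambda\in\Lambda(m,d)}\Gamma^{\lambda}$ together with the remark that no new summand types appear beyond $n=d$. Where you genuinely differ is in the packaging: the paper proves that $\Gamma^{d,k^n}$ is a (compact) projective generator with endomorphism ring $S_k(n,d)$ and leaves the equivalence to the standard projective-generator (Gabriel--Morita) criterion, whereas you build the quasi-inverse explicitly as the left Kan extension $L_n$, note that one comparison map is tautologically invertible, and reduce the other, via colimit preservation and presentations by representables, to the concrete composition pairing on the $kS_d$-side, which you settle by the additive-closure claim that $V^{\otimes d}$ is a summand of a finite sum of copies of $(k^n)^{\otimes d}$. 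Your route is longer but more self-contained (no black-boxed generator theorem) and it correctly recognizes that the additivity/retract argument must be run in $kS_d$-Mod, since $\Gamma^{d}P_k$ has no biproducts; the paper's route is shorter and stays entirely on the functor side. One cosmetic slip, which affects nothing: for the adjunction $L_n\dashv ev_{k^n}$ the map $M\to ev_{k^n}L_n(M)$ you compute is the unit, and $L_n ev_{k^n}(X)\to X$ is the counit, so your labels and arrow directions are interchanged.
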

Thus study of $\RepG$ is equivalent to that polynomial representations of $GL_n$ for large $n$ 
and we will exploit this at times to transfer certain notions and results from representations to functors. 
Nonetheless, the functor point of view is very useful, for example, in defining parametrized functors
and an internal closed monoidal structure on $\RepG$. 

\medskip

In passing, consider the smallest cases $d=0$ and $d=1$. The category $\Gamma^{0}P_{k}$ is equivalent to a point with $\eendo(pt) = k$, so
$\XRepG^0_k$ is just $k$-Mod, considered as constant representations of $GL(V)$. $\XRepG^1_k$ is also equivalent to $k$-Mod, but for a different reason, namely $S_{k}(1,1) \simeq k$. 
More plainly, a $k$-linear functor $P_{k} \to k$-Mod may take any value on $k$ and this value determines the functor. 


\subsection{Important examples and constructions}
\label{subsec:examples}

\noindent {\bf Exponential functors.} Let $V$ be in $P_k$. 
The usual multilinear algebraic constructions $V^{\otimes{d}}$, $\wedge^{d}(V)$, $\sym^{d}(V)$ and $\Gamma^{d}(V)$ give functors in $\RepG$. 
The last three are degree $d$ components, respectively, of the exterior algebra $\wedge(V)$, 
symmetric algebra $\sym(V)$ and the divided power algebra $\Gamma(V)$, all of which are graded-(co)commutative bialgebras. 
Appropriate universal properties of these bialgebras give isomorphisms of graded-(co)commutative bialgebras
$$\Gamma{(V\oplus W)} \simeq \Gamma(V)\otimes\Gamma(W) \, ,\quad
\sym(V\oplus W)\simeq \sym(V)\otimes\sym(W) \, ,\quad
\wedge(V\oplus W)\simeq \wedge(V)\otimes \wedge(W).
$$

Applying $\wedge, \sym$ and $\Gamma$ to a number of copies of $V$ and 
taking the homogeneous component of a fixed multidegree gives strict polynomial functors. 
More explicitly, for $\lambda  \in \Lambda(n, d)$, we have strict polynomial functors
$$
\Gamma^{\lambda}:= \Gamma^{\lambda_{1}}  \otimes...\otimes \Gamma^{\lambda_{n}}  \, ,\qquad
\sym^{\lambda}:= \sym^{\lambda_{1}}\otimes \ldots \otimes \sym^{\lambda_{n}}  \, ,\qquad
\wedge^{\lambda}:= \wedge^{\lambda_{1}}\otimes\ldots\otimes \ \wedge^{\lambda_{n}}.
$$
Here $\otimes$ corresponds to the usual tensor product of representations, i.e. 
for functors $X$ and $Y$ of degree $d$ and  $e$ respectively, $(X \otimes Y)(V):=  X(V)\otimes Y(V)$ defines a functor of degree $d+e$. 

\medskip

\noindent {\bf Weyl and dual Weyl functors.} 
Morphisms among various $\Gamma^{\lambda}$, $\sym^{\lambda}$ and $\wedge^{\lambda}$ obtained via (co)multiplication on the respective bialgebras lead to more (in some sense all) functors. 
We note in particular the Weyl functors $\Delta(\lambda)$ and the dual Weyl functor $\nabla(\lambda)$, where $\lambda$ is a partition of $d$. 
These were defined by Akin, Buchsbaum and Weyman~\cite{ABW}.   
Evaluation takes (dual) Weyl functors to (dual) Weyl modules. (The dual Weyl functors were originally called Schur functors, 
but we will need that name for a completely different functor, one from $\RepG$ to $kS_d$-Mod, see~\ref{subsec:schfun}.)  
We recall here a basic vanishing result. 
\begin{proposition}\label{prop:vanishing}
${\rm Ext}^i_{\XRepG^d_k} (\Delta(\lambda), \nabla(\mu)) = 0$ for all partitions $\lambda$, $\mu$ and all $i >0$.
\end{proposition}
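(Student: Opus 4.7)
My plan is to reduce the question to the Schur algebra and invoke its quasi-hereditary structure. By Proposition~\ref{thm:FS}, evaluating at $k^n$ for any $n \geq d$ is an equivalence $\RepG \simeq S_k(n,d)$-Mod; under this equivalence the Weyl and dual Weyl functors go to the Weyl and dual Weyl modules of the same names. Since Ext is preserved by equivalences of abelian categories, it suffices to establish $\mathrm{Ext}^i_{S_k(n,d)}(\Delta(\lambda), \nabla(\mu)) = 0$ for $i > 0$.

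For this, I would invoke the fact that $S_k(n,d)$ is quasi-hereditary for any commutative $k$: one orders the partitions in $\Lambda(n,d)$ by dominance and the resulting chain of heredity ideals has sections matching the $\Delta(\lambda)$, while their duals realize the $\nabla(\mu)$. This is classical integrally (Donkin, Cline-Parshall-Scott, Green-Woodcock), and because the heredity ideals together with the standard and costandard modules are $\Z$-free of finite rank, the whole quasi-hereditary structure base-changes to an arbitrary commutative $k$. The vanishing of positive Ext between standard and costandard objects is then a formal consequence of the axioms of a highest weight category.

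An alternative, more in the spirit of strict polynomial functors, works directly in $\RepG$ using Yoneda. Each $\Gamma^\nu$ is projective in $\RepG$ (it is a direct summand of the representable $\Gamma^{d, k^n}$, see Section~\ref{subsec:yoneda}), and $\hom(\Gamma^\nu, F)$ picks out the $\nu$-weight space of $F(k^n)$. The Akin-Buchsbaum-Weyman construction provides a finite resolution of $\Delta(\lambda)$ by direct sums of such $\Gamma^\nu$; applying $\hom(-, \nabla(\mu))$ then reduces the Ext vanishing to the exactness in positive degrees of an explicit combinatorial complex on the weight spaces of $\nabla(\mu)$, a classical calculation via semistandard tableaux.

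The main obstacle, in either approach, is keeping the argument uniform over an arbitrary commutative ring. Textbook treatments usually assume a field. The crucial input -- that both the heredity data for $S_\Z(n,d)$ and the ABW resolution of $\Delta(\lambda)_\Z$ consist of $\Z$-free modules of finite rank -- ensures that base change from $\Z$ to $k$ is flat and preserves the relevant Ext computations, so the vanishing descends uniformly to any $k$.
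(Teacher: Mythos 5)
Your main route is sound and, at its core, follows the same skeleton as the paper: pass through the equivalence of Proposition~\ref{thm:FS} to the Schur algebra, use a known integral/field-level result, and let $\Z$-freeness of the standard and costandard objects drive a base-change argument to an arbitrary commutative $k$. The paper packages this more economically: it simply cites the field case from \cite[Propositions II.4.13, A.10]{Jantzen} and then invokes the base-change theorem \cite[Theorem 5.3]{AB88} to pass first to $k=\Z$ and then to arbitrary commutative $k$. You instead invoke the split quasi-hereditary structure of $S_k(n,d)$ over an arbitrary base; that is a legitimate (and well-documented) alternative, but be aware that the assertion ``vanishing of positive Ext between standards and costandards is a formal consequence of the highest weight axioms'' is genuinely formal only over a field -- over a general commutative ring one needs the split/integral quasi-hereditary formalism of Cline--Parshall--Scott, where the statement still holds but requires an induction along the heredity chain rather than the usual field-case dimension shift. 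So your approach buys a more structural statement (quasi-heredity over any $k$) at the cost of heavier machinery, while the paper's citation of the field case plus \cite{AB88} is shorter and stays closer to what is actually needed.

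Your ``alternative'' argument in the spirit of $\RepG$ has a real gap as stated: after resolving $\Delta(\lambda)$ by finite direct sums of $\Gamma^\nu$ (which is indeed available, cf.\ \cite[Section 4]{AB88}) and applying $\hom_{\RepG}(-,\nabla(\mu))$, the exactness in positive degrees of the resulting complex of weight spaces is not ``a classical calculation via semistandard tableaux''; it is precisely the Ext-vanishing you are trying to prove, and no direct combinatorial proof is offered. The semistandard basis only tells you the terms of the complex are $k$-free; to get exactness you would again need the field case plus universal coefficients, i.e.\ you are back to the paper's argument. So keep the first route and drop, or substantially flesh out, the second.
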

This is well known in the equivalent category $\text{Pol}_d(k^d)$ when $k$ is a field, see~\cite[Propositions II.4.13, A.10]{Jantzen}. 
Then use base change (e.g.~\cite[Theorem 5.3]{AB88}) to deduce the vanishing first for $k = \Bbb Z$ and then for arbitrary commutative $k$.

\medskip

\noindent {\bf Parametrized functors.} 
For $X$ in $\RepG$ and $V$ in $P_k$, define as in~\cite{Touze14} 
$$ X_V:=X(V \otimes -) \quad \text{and} \quad X^{V}:= X(\hom_{k}(V,-)) = X(V^* \otimes -) = X_{V^*}.$$
Clearly $X \mapsto X^V$ is exact and preserves (co)products and hence (co)limits.
The important special case $(\Gamma^d)^V$ was already considered in~\cite{FS97}, 
where the slightly different notation $\Gamma^{d,V}$ was used and which we will follow. 
Note that $\Gamma^{d,V}$ are precisely the representable functors in $\RepG$, as
$$\Gamma^{d,V} := \Gamma^{d}(\hom_{k}(V, -)) = \hom_{\Gamma^{d}P_{k}}(V, -).$$ 


\subsection {Consequences of the Yoneda lemma.}  
\label{subsec:yoneda}

Recall the very useful natural isomorphism $\hom_{\RepG} (\Gamma^{d,V}, X) \simeq X(V).$ 
We list below several consequences and related facts.
\begin{enumerate}[leftmargin=*]

\item The functor $\hom_{\RepG}(\Gamma^{d,V}, -)$ is exact and preserves arbitrary (co)products and (co)limits. 

\item The class $\{\Gamma^{d,V}\}$ forms a projective generator in $\RepG$. Moreover, 
for any $X$ in $\RepG$ we have~\cite[III.7, Theorem 1]{MacLaneS}
$$X = \nonrep{V}{X}\Gamma^{d,V},$$ 
where $V$ runs over the objects in $\Gamma^d {P_k}$ and the colimit is taken over the category of morphisms $\Gamma^{d,V} \to X$. 
This gives a useful method to prove natural isomorphisms between functors from $\RepG$, 
namely do it first for $\{\Gamma^{d,V} \}$ and then check behavior on colimits. 
E.g., we have~\cite[Lemma 4.1]{Touze14} $\hom_{\RepG}(X^W,Y) \simeq \hom_{\RepG}(X,Y_W)$. 
For $X=\Gamma^{d,V}$ this just says 
$Y(W \otimes V) \simeq Y_W(V)$. 
Now observe that each procedure turns colimits in the $X$ variable into limits.

\item\label{item:wtspace}Using $\Gamma{(V\oplus W)} \simeq \Gamma(V)\otimes\Gamma(W)$, we get 
$\Gamma^{d,k^n}\simeq  \underset{\lambda\in \Lambda(n, d)}\bigoplus \Gamma^{\lambda}.$
One checks that under the Yoneda isomorphism, the summand $\hom_{\RepG} (\Gamma^{\lambda}, X)$ of 
$\hom_{\RepG} (\Gamma^{d,k^n}, X)$ identifies with $X(k^n)_\lambda$, the $\lambda$-weight space of the 
$S_k(n,d)$-module $X(k^n)$, e.g.~\cite[Corollary 2.12]{FS97} or~\cite[Section 2]{AB88}.

\item \label{item:FSproof}{\it Proof of Proposition~\ref{thm:FS}}: Given $V$ in $P_k$ we have $V \overset {\oplus} \hookrightarrow$ some $k^n$, 
so $\Gamma^{d,V} \overset {\oplus} \hookrightarrow \Gamma^{d,k^n}$ and 
thus $\{\Gamma^{d,k^n} | n \in \Bbb N\}$ is a projective generator for $\RepG$. 
Recall the decomposition of $\Gamma^{d,k^n}$ in item \ref{item:wtspace}. 
As $n$ increases, new summands $\Gamma^{\lambda}$ appear in this decomposition only up until $n=d$. 
The last one to appear is $\lambda = (1, \ldots, 1)$ consisting of $d$ 1's.  
For all $n \geq d$, the same set of $\Gamma^{\lambda}$ appears, only with higher multiplicities. 
Thus for any single $n\geq d$, the functor $\Gamma^{d,k^n}$ is a projective generator. 

\item  \label{finite-criterion} 
Recall from~\cite{Krause13} the full subcategory rep$\Gamma^d_k$ of $\RepG$ 
consisting of $X$ for which $X(V)$ is in $P_k$ for all $V$. 
For this it is enough for $X(k^d)$ to be in $P_k$ by~\cite[Remark 2.11]{Krause13} . 
We amplify this slightly. 
First, if $X(k^d)$ is a projective $k$-module, then so is $X(V)$ for all $V$ in $P_k$, because
$$X(V) \simeq \hom_{\RepG}(\Gamma^{d,V}, X) \overset {\oplus} \hookrightarrow \hom_{\RepG}(\Gamma^{d,k^m}, X) 
\overset {\oplus} \hookrightarrow \hom_{\RepG}( \underset{\rm finite} \oplus \Gamma^{d,k^d}, X) \simeq \underset{\rm finite} \oplus X(k^d).$$
For finite generation, we again have equivalent characterizations given by the following circular implications:
$X$ in $\RepG$ is finitely generated 
$\overset{\rm def} \Leftrightarrow$ $X$ is a quotient of a finite direct sum of representable functors $\Gamma^{d,W}$
$\Rightarrow$ each $X(V)$ is a finitely generated $k$-module
$\Rightarrow X(k^d)$ is a finitely generated $k$-module
$\Rightarrow X(k^d)$ is a finitely generated $S_k(d,d)$-module 
$\Rightarrow$ (by the equivalence in~\ref{thm:FS}) $X$ is a quotient of a finite direct sum of $\Gamma^{d,k^d}$,
i.e. $X$ is finitely generated. 

\end{enumerate}


\subsection{Internal tensor product and Internal Hom}
\label{subsec:inthom}
Following Krause~\cite{Krause13}, the closed monoidal structure on $\Gamma^{d}P_{k}$ leads 
via Kan extension to a closed monoidal structure on the functor category. 
Thus on $\RepG$ we have an internal tensor product (also known as Day convolution) $\intP$ 
and an internal hom $\mathbb{H}$ with the following properties~\cite[Proposition 2.4]{Krause13}.
\begin{proposition}\label{prop:inthom}
The bifunctors $\mathbb{H}: {\XRepG^{d}_{k}}^{op} \times \XRepG^{d}_{k} \rightarrow \XRepG^{d}_{k}$ and 
$\intP : \XRepG^{d}_{k} \times \XRepG^{d}_{k} \rightarrow \XRepG^{d}_{k}$ are defined by
$$
X \intP Y := \nonrep{V}{X} \ \nonrep{W}{Y} \ \Gamma^{d,V\otimes W}
\qquad and \qquad 
\mathbb{H}(X, Y) := {\underset{\Gamma^{d,V} \rightarrow X}{\xlim}} \ \nonrep{W}{Y} \ \Gamma^{d,\hom_{k}(V, W)} .$$
There is a natural isomorphism 
 $$\hom_{\XRepG^{d}_{k}}(X\intP Y, Z)\simeq \hom_{\XRepG^{d}_{k}}(X,\mathbb{H}(Y,Z)).$$
\end{proposition}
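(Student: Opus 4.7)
The plan is to reduce everything to representable functors via the co-Yoneda presentation $X = \nonrep{V}{X} \Gamma^{d,V}$ recalled in Section~\ref{subsec:yoneda}, then invoke repeatedly the Yoneda isomorphism $\hom_{\RepG}(\Gamma^{d,V}, F) \simeq F(V)$ together with the fact that $\hom_{\RepG}(-, Z)$ turns colimits into limits while evaluation at a fixed object preserves both (colimits and limits in $\RepG$ being computed pointwise).

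First I would compute $\hom_{\RepG}(X \intP Y, Z)$ by unrolling the defining double colimit of $X \intP Y$ over representables $\Gamma^{d, V \otimes W}$. This gives
$$\hom_{\RepG}(X \intP Y, Z) \ \simeq \ {\underset{\Gamma^{d,V} \to X}{\xlim}} \ {\underset{\Gamma^{d,W} \to Y}{\xlim}} \ Z(V \otimes W).$$
Next I would compute $\hom_{\RepG}(X, \mathbb{H}(Y, Z))$. Yoneda reduces it to ${\underset{\Gamma^{d,V} \to X}{\xlim}} \ \mathbb{H}(Y,Z)(V)$. Evaluating the defining formula of $\mathbb{H}$ pointwise at $V$ yields
$$\mathbb{H}(Y,Z)(V) \ \simeq \ {\underset{\Gamma^{d,W} \to Y}{\xlim}} \ {\underset{\Gamma^{d,U} \to Z}{\xcolim}} \ \Gamma^{d, \hom_k(W, U)}(V).$$

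The one genuinely substantive step is the identification
$$\Gamma^{d, \hom_k(W, U)}(V) \ = \ \Gamma^{d}(\hom_k(\hom_k(W, U), V)) \ \simeq \ \Gamma^{d}(\hom_k(U, V \otimes W)) \ = \ \Gamma^{d, U}(V \otimes W),$$
obtained by applying $\Gamma^d$ to the tensor-hom adjunction in $P_k$ (valid since $W$ lies in $P_k$ and is therefore reflexive). With this in hand, the inner colimit over $\Gamma^{d,U} \to Z$ collapses, by co-Yoneda applied pointwise, to $Z(V \otimes W)$, and one recovers exactly the same double limit as in the first computation.

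The step I expect to require most attention is not any single formal manipulation but the naturality bookkeeping: one must ensure that the isomorphisms above are natural in $X$, $Y$, and $Z$ simultaneously, so that the resulting bijection is a genuine tensor-hom adjunction of bifunctors and not merely a family of isomorphisms at each triple. I would handle this by noting that the slice category $\Gamma^{d,V} \to X$ is functorial in $X$, that each Yoneda and (co)limit step is natural in the outer variables, and by first checking upstream that $\intP$ and $\mathbb{H}$ are bifunctorial at all, which follows essentially for free from the universal properties of the defining colimits and limits.
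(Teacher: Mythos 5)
Your proof is correct, but note that the paper itself offers no proof of Proposition~\ref{prop:inthom}: it is quoted from~\cite[Proposition 2.4]{Krause13}, where $\intP$ and $\mathbb{H}$ are obtained by Day convolution, i.e.\ by Kan-extending the closed monoidal structure of $\Gamma^{d}P_{k}$ along the Yoneda embedding, and the tensor-hom adjunction is part of that general formalism. Your direct verification --- reducing both $\hom_{\RepG}(X\intP Y,Z)$ and $\hom_{\RepG}(X,\mathbb{H}(Y,Z))$ to the double limit of $Z(V\otimes W)$ over the slice categories of $X$ and $Y$, using the Yoneda lemma, pointwise computation of (co)limits, and the identification $\Gamma^{d,\hom_{k}(W,U)}(V)\simeq\Gamma^{d,U}(V\otimes W)$ furnished by the tensor-hom adjunction and reflexivity in $P_{k}$ --- is the hands-on version of that argument and is sound. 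Indeed, the same key identification is exactly what the paper uses immediately after the proposition to derive Touz\'e's formula $\mathbb{H}(X,Y)(U)\simeq\hom_{\RepG}(X^{U},Y)$, so your computation recovers that expression as a byproduct. The point you flag as delicate is the right one: the isomorphism $\hom_{k}(\hom_{k}(W,U),V)\simeq\hom_{k}(U,V\otimes W)$ must be checked to be compatible with the structure maps of the slice categories (which pass contravariantly through $\Gamma^{d,(-)}$), so that the two double limits agree as diagrams and the resulting bijection is natural in all three variables; your plan for handling this, together with establishing bifunctoriality of $\intP$ and $\mathbb{H}$ from the universal properties of the defining (co)limits, is adequate. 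The trade-off between the two routes is the usual one: the Kan-extension formalism delivers naturality and bifunctoriality for free, while your computation is self-contained and uses nothing beyond the Yoneda lemma and elementary properties of $P_{k}$.
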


It follows that we have $\Gamma^{d,V}\intP \ \Gamma^{d,W} \simeq \Gamma^{d,V\otimes W}$, 
$\mathbb{H}(\Gamma^{d,V}, \Gamma^{d,W}) \simeq \Gamma^{d,\hom_{k}(V, W)}$ and more generally for any $Y$ in $\RepG$
~\cite[Lemma 2.5]{Krause13} 
$$
\Gamma^{d,V} \intP \ Y \simeq Y^V \qquad, \qquad \mathbb{H}(\Gamma^{d,V}, Y) \simeq Y_V.
$$

Being a left adjoint, the internal tensor preserves colimits. 
In particular it is right exact and can be computed using a presentation by $\Gamma^{d,V}$. 
Similar remarks hold for $\Bbb H$. 
In addition there is a very useful alternative expression for $\Bbb H$, which was in fact the original definition by Touz\'e~\cite{Touze14}. 
We recover it as follows using compatibility of (co)limits with parametrization.
\begin{eqnarray*}
\mathbb{H}(X, Y)(U)&=&\Big( {\underset{\Gamma^{d,V} \rightarrow X}{\xlim}} \ \nonrep{W}{Y} \ \Gamma^{d,\hom_{k}(V, W)} \Big) (U)
\simeq {\underset{\Gamma^{d,V} \rightarrow X}{\xlim}} \ \nonrep{W}{Y} \ \hom_{\RepG} (\Gamma^{d,V \otimes U}, \ \Gamma^{d,W})\\
&\simeq& \hom_{\RepG} \Big(\nonrep{V}{X} \ \big(\Gamma^{d,V} \big)^U , \  \nonrep{W}{Y} \ \Gamma^{d,W} \Big) \\
&\simeq& \hom_{\RepG}(X^U,Y).
\end{eqnarray*}

The functors $\intP$ and $\Bbb H$ have derived versions, respectively $\overset{\bf L}\intP$ and ${\bf R} \Bbb H$, 
which may be calculated using appropriate projective/injective resolutions, see~\cite[Section 4]{Krause13} for details. 
The derived functor ${\bf R} \Bbb H(\wedge^d, -)$ was introduced as a version of Koszul duality 
by Cha\l{}upnik~\cite{Chalupnik08} and investigated further by Touz\'e~\cite{Touze13}. 
Touz\'e also calculated ${\bf R} \Bbb H$ involving classical exponential functors 
$\sym, \wedge, \Gamma$ and their Frobenius twists~\cite{Touze14}. 
Krause defined $\intP$ and used the derived functor of $\wedge^d \intP -$ to study Koszul/Ringel duality over arbitrary $k$~\cite{Krause13}.

\medskip

We note a simple fact for later use. Let $X$ and $Y$ be in $\RepG$. 
If $P \to X$ is a projective resolution, then so is $P^V \to X^V$, as $(-)^V$ is an exact functor and preserves projectivity. 
Therefore
\begin{equation}\label{rh}
{\bf R}\Bbb H (X,Y)(V) \simeq \Bbb H (P,Y)(V) \simeq \hom_{\RepG}(P^V,Y) \simeq {\bf R}{\rm Hom}_{\RepG}(X^V,Y).
\end{equation}


\subsection{Schur functor}
\label{subsec:schfun}

Classical Schur-Weyl duality arises from natural commuting actions on $(k^n)^{\otimes d}$ of $GL(k^n)$ on the left and $S_d$ on the right.
The centralizer of  the $S_d$-action is the Schur algebra $S = S_k(n,d)$ and, 
when $n > d$, the centralizer of $GL(k^n)$ is isomorphic to $(kS_d)^{op}$.
This leads to the Schur functor $\Sch$ and its adjoints, which we recall below. 
Mainly we will employ only two facts about $\Sch$: 
that it preserves all limits and colimits (as it has both adjoints) and the calculation of $\Sch(\Gamma^{d,V})$.
We will need the adjoints only once to prove two formal results (Corollaries~\ref{cor:adjointinternal},~\ref{cor:derivedadjointinternal}) about them. 

\medskip

To describe the set-up in terms of polynomial functors, we first recall the standard Schur algebra formulation. 
From item~\ref{item:wtspace} in Section~\ref{subsec:yoneda} we have the $S$-module decomposition
$S \simeq  \bigoplus \Gamma^{\lambda}(k^n)$ with $\lambda$ ranging over  $\Lambda(n, d)$.
Let $e$ be the idempotent in $S$ corresponding to the summand $\Gamma^{1^d}(k^n) = (k^n)^{\otimes d}$. 
We have the functor 
$$\Sch: S\text{-Mod} \to eSe\text{-Mod} \quad \text {given by} \quad M \mapsto eM \simeq eS \otimes_S M \simeq \hom_S(Se, M).$$
Using the usual tensor-hom adjunction, we get the left/right adjoints $\mathcal L$ and $\mathcal R$ of $\Sch$ given by standard formulas below.  
Using these, one easily sees further that $\Sch \mathcal L (N) \simeq N \simeq \Sch \mathcal R (N)$. 
$$\mathcal L (N) = Se \otimes _{eSe} N \quad \text{and} \quad \mathcal R (N) = \hom_{eSe} (eS, N).$$
In our case, the $S$-$eSe$ bimodule $Se \simeq (k^n)^{\otimes d}$. 
As $eSe \simeq \hom_S (Se, Se)^{op}$, we have $eSe \simeq kS_d$ by classical Schur-Weyl duality. 
One can see that the $eSe$-$S$ bimodule $eS \simeq ((k^n)^*)^{{\otimes d}}$. 
Combining with the equivalence~\ref{thm:FS}, we have the following formulation of the Schur functor and of its adjoints. 
Compare with the description in~\cite{Krause13} and its explanation in~\cite{Rei16}, see also~\cite[Section 3.1]{FP}.
$$\Sch:=  \hom_{\RepG}(\otimes^{d},-) : \RepG \rightarrow kS_{d}\text{-Mod},$$
$$\mathcal L(N): V \mapsto  V^{\otimes d} \otimes _{kS_d} N  \simeq (V^{\otimes d} \otimes N)_{S_d} 
\quad , \quad
\mathcal R (N): V \mapsto \hom_{kS_d} ( (V^{\otimes d})^*, N) \simeq (V^{\otimes d} \otimes N)^{S_d}.$$
For the description of the adjoints in terms of $S_d$-coinvariants and $S_d$-invariants 
one converts the natural right $S_d$-action on $V^{\otimes d}$ into a left action in the usual way.
Using these descriptions and the fact that both adjoints are one-sided inverses to $\Sch$, we immediately get
$$\mathcal L(W ^{\otimes d}) \simeq \sym^d_W  \quad ,  
\quad \mathcal R(W ^{\otimes d}) \simeq \Gamma^d_W \quad ,  
\quad \Sch (\sym^d_W) \simeq W ^{\otimes d} \simeq \Sch(\Gamma^d_W).$$
%
We will calculate $\Sch(\Gamma^{d,V})$ again in Proposition~\ref{prop:main} without using the formulas for adjoints. 

\begin{remark}\label{remark:adjointinternal} 
In passing we quote two results discovered recently by Reischuk, expressing the adjoints of $\Sch$ in terms of $\intP$ and $\Bbb H$.
We will not need these results, but include their proofs to highlight usefulness of the Yoneda lemma. 
See~\cite[Theorems 4.3 and 5.4]{Rei16} for original arguments.
$$
\mathcal L \Sch (X) \simeq X \intP \sym^d \qquad {\rm and} \qquad
\mathcal R \Sch (X) \simeq \Bbb H (\sym^d, X).$$   
$\mathcal L \Sch (-)$ and $- \intP \sym^d$ preserve colimits and take $\Gamma^{d,V}$ to $\sym^{d,V}$,
giving the first result. For the other, we have the following series of functorial isomorphisms, 
where we have suppressed $\hom_{\RepG}$ and $\hom_{kS_d}$ from the notation.
$$(Y, \mathcal R \Sch (X)) \simeq (\Sch (Y), \Sch (X)) \simeq (\mathcal L \Sch (Y), X) \simeq (Y \intP \sym^d, X) \simeq (Y, \Bbb H( \sym^d, X)).$$

\end{remark}

\begin{remark} \label{remark:schurequivalence}
When $d!$ is a unit in $k$ (e.g., when $k$ is a field of characteristic 0 or $p > d$), 
$\Sch$ is well known to be an equivalence of categories. 
This is because, for any $\lambda$ in $\Lambda(n, d)$, 
the appropriate symmetrization map $\otimes^{d}\rightarrow\Gamma^{\lambda}$ is surjective. 
Thus $\otimes^{d}$ itself is a projective generator for ${\RepG}$.  

\end{remark}


\subsection{Duality}  
\label{subsec:duality}

The contravariant dual $M^\circ$ of a polynomial representation $M$ is the $k$-linear dual $M^*$, 
made into a left $GL(k^n)$-module via matrix transpose~\cite[Section 2.7]{Green80}. 
We have the classical dualities $\Gamma^d(V)^* \simeq \sym^d(V^*)$ and $\wedge(V)^* \simeq \wedge(V^*)$, 
well known at least for $V$ free of finite rank. 
Using these,~\cite{ABW} gives more generally that $\Delta(\lambda)(V)^* \simeq \nabla(\lambda)(V^*)$ for such $V$. 

\medskip

Following~\cite[p.105]{Kouwenhoven} and~\cite[3.4]{Kuhn}, we define the contravariant dual (also known as Kuhn dual) 
$X^\circ$ of an arbitrary strict polynomial functor $X$ by  $X^\circ(V) := X(V^*)^*$.
Evaluation on $k^n$ takes this duality to the usual contravariant duality for $S_k(n,d)$-modules, justifying the notation. 
Thus ${\wedge^d}$ is self-dual, ${\Gamma^d}$ and $\sym^d$ are dual to each other 
and more generally so are $\Delta(\lambda)$ and $\nabla(\lambda)$.  

\medskip

Duality behaves well on rep$\Gamma^d_k$, i.e. on functors taking values in $P_k$. 
If $Y$ is in rep$\Gamma^d_k$ (equivalently, if $Y(k^d)$ is in $P_k$, see item~\ref{finite-criterion} in Section~\ref{subsec:yoneda}), 
so is $Y^\circ$. It follows for such $Y$ that $Y^{\circ\circ} \simeq Y$. 
This is false in general, e.g. consider $k = \Bbb Z$ and $Y$ taking torsion values.

\medskip

Duality is an exact functor if $k$ is self-injective (e.g., a field). 
For general $k$ one may use the derived functor $(-)^\diamond$ of $(-)^\circ$ introduced by Krause,
which we will need in Proposition~\ref{prop:twoweyl}.
By~\cite[Section 4]{Krause13},
$X^\diamond \simeq X^\circ$ for $X$ in rep$\Gamma^d_k$ as $(-)^\circ$ is exact on this Quillen exact category. 
Concretely,  a projective resolution $Q \to X \to 0$ stays exact after applying $(-)^\circ$, 
e.g. use that $Q(k^d) \to X(k^d) \to 0$ is split exact over $k$. 

\medskip

Following Krause but slightly more generally, we relate duality with $\Bbb H$ and $\intP$, 
compare~\cite[Lemmas 2.7, 2.8]{Krause13}.
We also include a part of~\cite[Lemma 4.5]{Krause13}, which we need in Proposition~\ref{prop:twoweyl}.
Note that the hypothesis on $X$ is valid when $X$ is a Weyl functor,  see~\cite[Section 4]{AB88}. 

\begin{lemma}\label{lemma:duality}
In $\XRepG^{d}_{k}$, $\Bbb H (X, Y^\circ) \simeq (X \intP Y)^\circ \simeq  \Bbb H (Y, X^\circ)$. 
Then $Y =\Gamma^d$ gives $X^\circ \simeq \Bbb H (X, \sym^d)$.  
In ${\rm D}(\XRepG^{d}_{k})$, we have 
$X \overset{\bf L}\intP Y^\diamond  \simeq  {\bf R}\mathbb{H}(X,Y)^{\diamond}$ when 
$X \simeq$ a bounded complex of finite direct sums of representable functors.
\end{lemma}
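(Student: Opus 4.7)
The plan is to establish the three underived isomorphisms by a Yoneda/colimit argument on representables and then lift the first one to the derived category. Throughout I will repeatedly use the identifications $\Gamma^{d,V}\intP Y\simeq Y^V$ and $\Bbb H(\Gamma^{d,V},Y)\simeq Y_V$ from Section~\ref{subsec:inthom}, together with the exactness and projective-preservation properties of the parametrization functors $(-)^V$ and $(-)_V$.

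\emph{First isomorphism} $\Bbb H(X,Y^\circ)\simeq (X\intP Y)^\circ$. I verify it first on $X=\Gamma^{d,V}$: both sides evaluated at $U$ equal $Y(V^*\otimes U^*)^*$, naturally in $V,U,Y$. Indeed $\Bbb H(\Gamma^{d,V},Y^\circ)(U)=(Y^\circ)_V(U)=Y((V\otimes U)^*)^*=Y(V^*\otimes U^*)^*$, while $(\Gamma^{d,V}\intP Y)^\circ(U)=(Y^V)^\circ(U)=Y(V^*\otimes U^*)^*$. For general $X$, write $X=\xcolim_{\Gamma^{d,V}\to X}\Gamma^{d,V}$ as in item~2 of Section~\ref{subsec:yoneda}. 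Since $-\intP Y$ preserves colimits (left adjoint) and $(-)^\circ$ turns pointwise $k$-linear colimits into limits, $(X\intP Y)^\circ\simeq\xlim(\Gamma^{d,V}\intP Y)^\circ$. The adjunction of Proposition~\ref{prop:inthom} likewise makes $\Bbb H(-,Y^\circ)$ carry colimits to limits. Both sides are thus limits of canonically isomorphic diagrams, so the representable isomorphism extends. The symmetry $X\intP Y\simeq Y\intP X$ then yields the second isomorphism $(X\intP Y)^\circ\simeq\Bbb H(Y,X^\circ)$. Specialising the first to $Y=\Gamma^d=\Gamma^{d,k}$, using $X\intP\Gamma^{d,k}\simeq X^k=X$ and the classical duality $(\Gamma^d)^\circ\simeq\sym^d$, gives $X^\circ\simeq\Bbb H(X,\sym^d)$.

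\emph{Derived statement.} Fix a projective resolution $Q\to Y$, so $Y^\diamond$ is represented by $Q^\circ$. For $X=\Gamma^{d,V}$: since $\Gamma^{d,V}\intP- =(-)^V$ is exact (Section~\ref{subsec:examples}), $\Gamma^{d,V}$ is $\intP$-flat, hence $\Gamma^{d,V}\overset{\bf L}\intP Y^\diamond\simeq\Gamma^{d,V}\intP Q^\circ\simeq (Q^\circ)^V$. On the other side, $\Bbb H(\Gamma^{d,V},-)=(-)_V$ is exact, so ${\bf R}\Bbb H(\Gamma^{d,V},Y)\simeq Y_V$; and because $(\Gamma^{d,W})_V\simeq\Gamma^{d,W\otimes V^*}$, the functor $(-)_V$ preserves projectives, so $Q_V\to Y_V$ is a projective resolution and $(Y_V)^\diamond$ is represented by $(Q_V)^\circ$. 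The pointwise identity $(P_V)^\circ(U)=P(V\otimes U^*)^*=(P^\circ)^V(U)$, natural in $P,V,U$, now delivers $(Q_V)^\circ\simeq (Q^\circ)^V$, matching the left-hand expression. For a general bounded complex $X^\bullet$ of finite direct sums of representables, both functors $X\mapsto X\overset{\bf L}\intP Y^\diamond$ and $X\mapsto{\bf R}\Bbb H(X,Y)^\diamond$ are additive triangulated functors covariant in $X$ (the second is a composition of the two contravariant functors $\Bbb H(-,Y)$ and $(-)^\diamond$), so the representable case propagates to $X^\bullet$ by the total-complex construction, equivalently by induction on the length of $X^\bullet$ using cones.

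The main technical obstacle sits in the derived portion: one must carefully coordinate the two contravariances in ${\bf R}\Bbb H(-,Y)^\diamond$ with the choice of projective resolution $Q\to Y$ on both sides and with the bounded-complex structure on $X^\bullet$. The hypothesis that $X$ is a bounded complex of finite direct sums of representables is precisely what supplies the needed exactness—flatness of each $\Gamma^{d,V}$ for $\intP$, and exactness of each $(-)_V$ on the $\Bbb H$ side—that lets the underived identity $(P_V)^\circ\simeq (P^\circ)^V$ survive all the derived manipulations intact.
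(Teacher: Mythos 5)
Your proposal is correct and follows essentially the same route as the paper: verify the underived isomorphism on representables $\Gamma^{d,V}$, extend to general $X$ because both sides turn colimits in $X$ into limits, deduce the second isomorphism from the symmetry of $\intP$ and the third by setting $Y=\Gamma^d$, and obtain the derived statement by a direct termwise computation with a projective resolution $Q\to Y$, using that $X$ is a bounded complex of finite sums of representables. Your treatment is simply a more detailed write-up (e.g.\ the explicit identity $(P_V)^\circ\simeq(P^\circ)^V$ and the cone/total-complex bookkeeping) of the paper's terse argument.
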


\begin{proof}
By symmetry of $\intP$, it suffices to show $\Bbb H (X, Y^\circ) \simeq (X \intP Y)^\circ$. 
As both expressions turn colimits in the $X$ variable into limits, 
it suffices to check this for $X = \Gamma^{d,W}$, which is immediate. 
The last statement follows by direct calculation using a projective resolution of $Y$. 
\end{proof}

In general one cannot move all occurrences of duality in~\ref{lemma:duality} to one side 
due to potentially bad behavior of duality outside rep$\Gamma^d_k$.  
If $Y$ is in rep$\Gamma^d_k$, then $\Bbb H(X,Y) \simeq \Bbb H(X,Y^\circ{}^\circ) \simeq (X \intP Y^\circ)^\circ$.
If $X \intP Y$ is in rep$\Gamma^d_k$, then $X \intP Y \simeq \Bbb H(X,Y^\circ)^\circ$, 
but for general $k$ this can fail even if $X$ and $Y$ are in rep$\Gamma^d_k$, see Remark~\ref{2problem}.


\section{Results}\label{sec:main}

Recall that unless otherwise mentioned, $k$ is an arbitrary commutative ring. 


\subsection{Sch is a closed monoidal functor}\label{sec:monoidal}

We prove that Sch takes the internal tensor product $\intP$ and internal hom $\Bbb H$ in $\RepG$ 
to the corresponding operations in $kS_d$-Mod. 
We show that this stays true at the derived level and note an interesting consequence for modular representations. 
We also recover and extend by purely formal means recent results of Reischuk~\cite{Rei16} 
about compatibility of each internal operation with an appropriate adjoint of the Schur functor.

\medskip

As  $\intP$ and $\Bbb H$ are defined using representable functors $\Gamma^{d,V}$, we first compute $\Sch(\Gamma^{d,V}).$

\begin{proposition}\label{prop:main}
$\Sch(\Gamma^{d,V})\simeq (V^*)^{\otimes{d}}$ with $S_{d}$ acting by permuting the factors of $(V^*)^{\otimes{d}}$ 
(left action corresponding to the natural right action on $V^{\otimes{d}}$). 
\end{proposition}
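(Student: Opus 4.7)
The plan is as follows. I would first reduce to a weight-space Yoneda calculation by identifying $\otimes^d$ with $\Gamma^{(1^d)} = \Gamma^1 \otimes \cdots \otimes \Gamma^1$ ($d$ factors), and then observing that this functor is precisely the $(1^d)$-summand in the exponential decomposition $\Gamma^{d,k^d} \simeq \bigoplus_{\lambda \in \Lambda(d,d)} \Gamma^\lambda$ recorded in item~(3) of Section~\ref{subsec:yoneda}. That same item tells us that applying $\hom_{\RepG}(-, \Gamma^{d,V})$ to this $(1^d)$-summand picks out the $(1^d)$-weight subspace of $\Gamma^{d,V}(k^d)$.

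Next I would compute this weight space explicitly. We have $\Gamma^{d,V}(k^d) = \Gamma^d(\hom_k(V, k^d)) = \Gamma^d(V^* \otimes k^d) = \Gamma^d(V^* \oplus \cdots \oplus V^*)$ ($d$ summands). The exponential isomorphism $\Gamma(U_1 \oplus \cdots \oplus U_d) \simeq \Gamma(U_1) \otimes \cdots \otimes \Gamma(U_d)$ in degree $d$ gives
$$\Gamma^d(V^* \otimes k^d) \simeq \bigoplus_{\mu \in \Lambda(d,d)} \Gamma^{\mu_1}(V^*) \otimes \cdots \otimes \Gamma^{\mu_d}(V^*),$$
with the $\mu$-summand being the $\mu$-weight space for the diagonal torus of $GL(k^d)$. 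The $(1^d)$-summand is $\Gamma^1(V^*)^{\otimes d} = (V^*)^{\otimes d}$, yielding the claimed isomorphism as a $k$-module.

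It remains to track the $S_d$-action. The permutation subgroup $S_d \hookrightarrow GL(k^d)$ (via $e_i \mapsto e_{\sigma(i)}$) acts on $\Gamma^{d,k^d} = \hom_{\Gamma^d P_k}(k^d, -)$ by pre-composition; restricted to the summand $\Gamma^{(1^d)} = \otimes^d$, this is by construction the natural $S_d$-action that permutes tensor factors (the right action dualised to a left one on the hom space). Under the Yoneda identification, this corresponds to the canonical $S_d$-action on $\Gamma^{d,V}(k^d) = \Gamma^d(V^* \otimes k^d)$ induced by permuting the $d$ copies of $V^*$. Restricted to the $(1^d)$-weight space this is precisely permutation of tensor factors on $(V^*)^{\otimes d}$, dual to the standard right permutation action on $V^{\otimes d}$.

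The main obstacle I anticipate is keeping the left/right conventions straight while tracking how the right $S_d$-action on $\otimes^d$ (as the source of $\hom_{\RepG}(-, -)$) transports across two successive identifications — first to the Yoneda left action on $\hom_{\RepG}(\Gamma^{d,k^d}, \Gamma^{d,V}) \simeq \Gamma^{d,V}(k^d)$, and then to permutation of the copies of $V^*$ in the exponential decomposition. Beyond this bookkeeping the proof is a direct assembly of the Yoneda lemma with the exponential formula for $\Gamma^d$.
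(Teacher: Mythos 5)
Your proposal is correct and follows essentially the same route as the paper's proof: identify $\Sch(\Gamma^{d,V})=\hom_{\RepG}(\Gamma^{1^d},\Gamma^{d,V})$ with the $(1^d)$-weight space of $\Gamma^{d}(V^*\otimes k^d)$ via item~(3) of Section~\ref{subsec:yoneda}, extract $(V^*)^{\otimes d}$ from the exponential decomposition, and then check that precomposition by permutations transports to permutation of the copies of $V^*$. The level of detail in your treatment of the $S_d$-action matches the paper's own ``one checks'' verification, so nothing essential is missing.
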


\begin{proof}
Let $e_1,e_2,\ldots,e_d$ be a basis of the vector space $k^d$. 
We identify $V^* \otimes k^d$ with $V_1 \oplus V_2 \oplus \cdots \oplus V_d$ where $V_i = V^* \otimes ke_i \simeq V^*$. 
As $\otimes^d = \Gamma^{1^d}$, from item~\ref{item:wtspace} in Section~\ref{subsec:yoneda} we have 
$\Sch(X) = \hom_{\RepG}(\Gamma^{1^d}, X) \simeq$ the $1^d$-weight space of $X(k^d)$.
Using these identifications we get 
\begin{eqnarray*}
\Sch(\Gamma^{d,V})&=&\hom_{\RepG}(\otimes^d,\Gamma^{d,V})\\
&\simeq&1^d\text{-weight space of the }GL(k^d)\text{-module } \, \Gamma^d(V^* \otimes k^d)\\
&\simeq&\Gamma^d(V_1 \oplus V_2 \oplus \cdots \oplus V_d)_{1^d}.
\end{eqnarray*}
By the exponential property $\Gamma(U \oplus W) \simeq \Gamma(U) \otimes \Gamma(W)$, we have
$$\Gamma^d(V_1 \oplus V_2 \oplus \cdots \oplus V_d) \simeq 
\bigoplus _{\lambda \in \Lambda(d,d)}\Gamma^{\lambda_1}(V_1) \otimes \cdots \otimes \Gamma^{\lambda_d}(V_d).$$
In this decomposition, 
$\Gamma^{\lambda_1}(V_1) \otimes \Gamma^{\lambda_2}(V_2) \otimes \cdots \otimes \Gamma^{\lambda_d}(V_d)$ is the 
$\lambda=(\lambda_1,\lambda_2,\ldots,\lambda_d)$-weight space of
the $GL(k^d)$-module $\Gamma^d(V^* \otimes k^d)$. 
As we require the $1^d=(1,1,\ldots,1)$-weight space, we have 
$\Sch(\Gamma^{d,V}) \simeq V_1\otimes \cdots \otimes V_d \simeq (V^*)^{\otimes d}$.

\medskip

One checks that the right action of $\eendo_{\RepG}(\otimes^d)$ by precomposition on $\hom_{\RepG}(\otimes^d,\Gamma^{d,V})$, 
after applying the isomorphisms with $kS_d^{op}$ and $(V^*)^{\otimes d}$ respectively, 
translates into the claimed left $kS_d$-action by permuting the tensor factors. 
\end{proof}

\begin{remark}
Similar arguments show that (i) $\Sch(\sym^{d,V})=(V^*)^{\otimes d}$ with the same $kS_d$-action and 
(ii) $\Sch(\wedge^{d,V})=(V^*)^{\otimes d}$ with the $kS_d$-action twisted by the sign, owing to the sign involved in the exponential property for the exterior algebra.
\end{remark}

\begin{theorem}
\label{thm:main}
For strict polynomial functors $X,Y$ of degree $d$, we have natural isomorphisms 
$$\Sch(X \intP Y)\simeq\Sch(X)\otimes \Sch(Y) \qquad {\rm and} \qquad
\Sch(\Bbb H (X,Y))\simeq\hom(\Sch(X), \Sch(Y)),$$
where the $\otimes$ and $\hom$ on the right hand side are calculated in $k$-{\rm Mod} and  
the action of $kS_d$ on these is defined in the usual way.
\end{theorem}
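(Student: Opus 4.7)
The plan is to reduce both claims to the representable case $X=\Gamma^{d,V}$, $Y=\Gamma^{d,W}$, where Proposition~\ref{prop:main} already tells us $\Sch(\Gamma^{d,V})\simeq(V^*)^{\otimes d}$ with the permutation action. The key enabling fact is that $\Sch$ preserves all limits and colimits, because it has both a left adjoint $\mathcal L$ and a right adjoint $\mathcal R$ (see Section~\ref{subsec:schfun}). Combined with the colimit/limit presentations of $\intP$ and $\Bbb H$ from Proposition~\ref{prop:inthom}, this reduces everything to recognizing standard symmetric monoidal isomorphisms in $k$-Mod.

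For the first isomorphism, apply $\Sch$ to the double colimit $X\intP Y=\xcolim_{V}\xcolim_{W}\Gamma^{d,V\otimes W}$. Since $\Sch$ preserves colimits, this gives $\Sch(X\intP Y)\simeq\xcolim_{V}\xcolim_{W}\bigl((V\otimes W)^*\bigr)^{\otimes d}$. The canonical isomorphism $(V\otimes W)^*\simeq V^*\otimes W^*$ for $V,W$ in $P_k$, combined with the standard reshuffling $(V^*\otimes W^*)^{\otimes d}\simeq(V^*)^{\otimes d}\otimes(W^*)^{\otimes d}$ in the symmetric monoidal category $k$-Mod, identifies each term with $\Sch(\Gamma^{d,V})\otimes\Sch(\Gamma^{d,W})$ as a $kS_d$-module under the diagonal action. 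Finally, since $-\otimes_k-$ preserves colimits in each variable (being a left adjoint), the double colimit assembles to $\bigl(\xcolim_V\Sch(\Gamma^{d,V})\bigr)\otimes\bigl(\xcolim_W\Sch(\Gamma^{d,W})\bigr)\simeq\Sch(X)\otimes\Sch(Y)$.

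For the second isomorphism, apply $\Sch$ to $\Bbb H(X,Y)=\xlim_{V}\xcolim_{W}\Gamma^{d,\hom_k(V,W)}$; because $\Sch$ preserves both limits and colimits we obtain $\Sch(\Bbb H(X,Y))\simeq\xlim_V\xcolim_W\bigl(\hom_k(V,W)^*\bigr)^{\otimes d}$. For $V,W$ in $P_k$ we have $\hom_k(V,W)^*\simeq V\otimes W^*$, so each term becomes $V^{\otimes d}\otimes(W^*)^{\otimes d}$. To match this with $\hom(\Sch(X),\Sch(Y))$, use that $\hom(-,-)$ sends colimits in the first argument to limits, so $\hom(\Sch(X),\Sch(Y))\simeq\xlim_V\hom\bigl((V^*)^{\otimes d},\Sch(Y)\bigr)$; and since $(V^*)^{\otimes d}$ is finitely generated projective over $k$, the functor $\hom((V^*)^{\otimes d},-)$ is naturally isomorphic to $V^{\otimes d}\otimes_k-$ and hence preserves colimits, yielding $\xlim_V\xcolim_W V^{\otimes d}\otimes(W^*)^{\otimes d}$, the same expression as before.

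The main obstacle is a bookkeeping one: verifying that all the natural isomorphisms above respect the $S_d$-actions. The $S_d$-action on $\Sch(\Gamma^{d,V})$ is the permutation action from Proposition~\ref{prop:main}; the action on $\hom((V^*)^{\otimes d},Z)$ is the usual conjugation action $(\sigma f)(\xi)=\sigma\cdot f(\sigma^{-1}\xi)$; and the action on a tensor product is diagonal. A short direct check, using that all the isomorphisms in sight come from the symmetric monoidal structure on $k$-Mod (duality, evaluation, and the swap of tensor factors), shows these actions are compatible, completing both arguments.
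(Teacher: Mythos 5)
Your proposal is correct and follows essentially the same route as the paper: reduce to representables via Proposition~\ref{prop:main}, push $\Sch$ through the (co)limit presentations of $\intP$ and $\Bbb H$ using its two adjoints, and finish with colimit-preservation of $\otimes$ and of $\hom((V^*)^{\otimes d},-)$, the latter because $(V^*)^{\otimes d}$ is finitely generated projective over $k$. The only difference is presentational (you apply $\Sch$ directly to the double colimit rather than first recording the representable case as a separate step), and your remark on checking compatibility of the $S_d$-actions matches what the paper leaves as a routine verification.
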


\begin{proof}

First consider the case when $X=\Gamma^{d,V}$ and $Y=\Gamma^{d,W}$ are representable functors.  
From Proposition~\ref{prop:main} we have 
$$\Sch(\Gamma^{d,V} \intP \Gamma^{d,W}) \simeq \Sch(\Gamma^{d,V \otimes W})
\simeq((V\otimes W)^{*})^{\otimes d}
\simeq (V^*)^{\otimes {d}}\otimes (W^*)^{\otimes {d}}
\simeq \Sch(\Gamma^{d,V})\otimes \Sch(\Gamma^{d,W}).
$$
Similarly we have $\Sch( \Bbb H (\Gamma^{d,V} , \Gamma^{d,W}) ) \simeq \hom (\Sch(\Gamma^{d,V}) , \Sch(\Gamma^{d,W})).$

\medskip

For general $X$ and $Y$ we express them as colimits of representable functors and use the fact that 
$\Sch$, $\intP$ and the Kronecker $\otimes$ product  preserve colimits, since they are left adjoints.
 \begin{eqnarray*}
 \Sch(X \intP Y) &\simeq& \Sch \Big(\nonrep{V}{X} \ \nonrep{W}{Y} \ \Gamma^{d,V \otimes W} \Big)\\
&\simeq & \nonrep{V}{X} \ \nonrep{W}{Y} \ \Sch \big(\Gamma^{d,V \otimes W} \big)\\
&\simeq & \nonrep{V}{X} \ \nonrep{W}{Y} \ \Sch \big(\Gamma^{d,V} \big) \otimes \Sch \big(\Gamma^{d,W} \big) \\
&\simeq &  \Sch \Big( \nonrep{V}{X} \ \Gamma^{d,V} \Big) \otimes \Sch \Big( \nonrep{W}{Y} \ \Gamma^{d,W} \Big) \\
&\simeq& \Sch(X) \otimes \Sch(Y).
 \end{eqnarray*}
The proof for $\Bbb H$ proceeds the same way. 
For the second isomporphism, one observes additionally that Sch preserves limits 
and for the fourth isomporphism that $\hom(\Sch (\Gamma^{d,V} ), - )$ preserves colimits 
because $\Sch (\Gamma^{d,V}) \simeq {V^*}^{\otimes d}$ is a finitely generated projective $k$-module.
\end{proof}

The preceding result continues to hold when all functors are replaced by their derived analogues.
This is essentially because $\Sch (\Gamma^{d,V}) \simeq {V^*}^{\otimes d}$ is a projective $k$-module 
and so (even though it is not a projective $kS_d$-module in general) it is acyclic for $\otimes$ and $\hom$.
This gives a proof via Grothendieck spectral sequence in appropriately bounded derived categories.
We will need a little extra work because, as in~\cite{Krause13}, we will work with unbounded derived categories.  
One gets the result for unbounded complexes using truncations and homotopy (co)limits, see~\cite{Spalt88, BN}.

\begin{theorem}
\label{thm:intderived} Let $X,Y$ be objects in the unbounded derived category ${\rm D} (\XRepG^{d}_{k})$. 
Then we have natural isomorphisms
 $$\Sch(X\overset{\bf L}\intP Y)\simeq \Sch(X)\overset{\bf L}\otimes \Sch(Y) \quad {\rm and} \quad 
\Sch( {\bf R}\Bbb H (X, Y))\simeq {\bf R}{\rm Hom}(\Sch(X), \Sch(Y)).$$
\end{theorem}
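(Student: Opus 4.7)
The plan is to exploit the fact that $\Sch$ is exact and preserves arbitrary (co)limits (being both a left and right adjoint), so it descends unchanged to derived categories without any further derivation on the $\Sch$ side. The game then reduces to choosing resolutions on the $\XRepG^d_k$ side so that the $k S_d$-complexes produced by $\Sch$ are themselves suitable for computing derived tensor and derived hom.

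For the first isomorphism, I would pick K-projective resolutions $P^{\bullet} \to X$ and $Q^{\bullet} \to Y$ whose terms are direct sums of representable functors $\Gamma^{d,V}$; such resolutions exist for unbounded complexes in the Grothendieck category $\XRepG^d_k$ by the machinery of~\cite{Spalt88, BN}, since $\{\Gamma^{d,V}\}$ is a set of compact projective generators. Then $X \overset{\bf L}\intP Y$ is represented by the total complex of the bicomplex $P^{\bullet} \intP Q^{\bullet}$. Applying $\Sch$ and invoking Theorem~\ref{thm:main} termwise gives
$$\Sch(P^\bullet \intP Q^\bullet) \simeq \Sch(P^\bullet) \otimes \Sch(Q^\bullet).$$
Each $\Sch(\Gamma^{d,V}) \simeq (V^*)^{\otimes d}$ is finitely generated projective over $k$, so $\Sch(P^\bullet)$ is a complex of $k$-flat $k S_d$-modules; being constructed as a $\Sch$-image of a K-projective complex in $\XRepG^d_k$ via an exact coproduct-preserving functor, it is K-flat over $k$. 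Hence the right-hand side already computes $\Sch(X) \overset{\bf L}\otimes \Sch(Y)$, yielding the first isomorphism.

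For the second isomorphism I would take the same K-projective $P^{\bullet} \to X$ together with a K-injective resolution $Y \to I^{\bullet}$ in $\XRepG^d_k$, so that $\mathbf{R}\mathbb{H}(X,Y) \simeq \mathbb{H}(P^\bullet, I^\bullet)$. Applying $\Sch$ and invoking Theorem~\ref{thm:main} termwise produces $\hom(\Sch(P^\bullet), \Sch(I^\bullet))$. Since each $\Sch(\Gamma^{d,V})$ is finitely generated projective over $k$, $\hom_k$ out of $\Sch(P^\bullet)$ is exact and commutes with products, which makes $\Sch(P^\bullet)$ adequate for computing derived $\hom$ on the left variable; on the right, $\Sch(I^{\bullet})$ remains K-injective because $\Sch$ admits the left adjoint $\mathcal{L}$ (so its right adjoint $\Sch$ preserves K-injectivity). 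Hence this computes $\mathbf{R}\hom(\Sch(X), \Sch(Y))$.

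The principal obstacle is the unbounded setting: I must verify that $\Sch$ actually preserves the K-projective / K-flat / K-injective properties of the chosen resolutions, and that taking total complexes of bicomplexes commutes with $\Sch$ and with the internal operations. The first follows because $\Sch$ is exact and preserves arbitrary coproducts and products; the second from compatibility of $\Sch$ with homotopy (co)limits, which is again a consequence of its having both adjoints. As an alternative, and in line with~\cite{Krause13}, one can first prove the statement on bounded truncations $\tau^{\leq n} X$ (where classical resolutions suffice) and then pass to homotopy (co)limits via $X \simeq \mathrm{hocolim}\, \tau^{\leq n} X$; since $\Sch$, $\overset{\bf L}\intP$, $\mathbf{R}\mathbb{H}$, $\overset{\bf L}\otimes$ and $\mathbf{R}\hom$ all interact correctly with these hocolim/holim, the unbounded case reduces cleanly to the bounded case handled by Theorem~\ref{thm:main}.
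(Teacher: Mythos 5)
Your overall strategy (resolve by direct sums of representables, apply Theorem~\ref{thm:main} termwise, and lean on the fact that $\Sch(\Gamma^{d,V})\simeq (V^*)^{\otimes d}$ is $k$-projective) is the right one, and your fallback via truncations and homotopy (co)limits is essentially how the paper actually proceeds. But your primary argument has a genuine error in the ${\bf R}\Bbb H$ half: you claim $\Sch(I^\bullet)$ is K-injective ``because $\Sch$ admits the left adjoint $\mathcal L$.'' Having a left adjoint is not enough; a right adjoint preserves (K-)injectives only when its left adjoint is \emph{exact}, and $\mathcal L(N)=Se\otimes_{eSe}N$ is not exact in general -- this is exactly the point the paper emphasizes when it says the adjoints of $\Sch$ are not exact in positive characteristic (their derived functors are the interesting objects in~\cite{DEN}). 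So the step identifying $\hom(\Sch(P^\bullet),\Sch(I^\bullet))$ with ${\bf R}{\rm Hom}(\Sch(X),\Sch(Y))$ is unjustified as written. It can be repaired without that claim: compare $\Sch(I^\bullet)$ with a genuine K-injective resolution $J$ of $\Sch(Y)$ in $kS_d$-Mod, using that $\Sch(P^\bullet)$ is K-projective as a complex of $k$-modules (so $\hom(\Sch(P^\bullet),-)$ preserves quasi-isomorphisms) and that restriction of a K-injective $kS_d$-complex to $k$ is K-injective ($kS_d$ is finite free over $k$), but none of this is in your text.

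The second gap, which you partly flag yourself, is that in the unbounded setting ``terms are ($k$-)flat/projective'' does not imply K-flatness or the analogous property for $\hom$: an unbounded complex of projectives need not be K-flat (e.g.\ the periodic complex $\cdots\xrightarrow{2}\Z/4\xrightarrow{2}\Z/4\xrightarrow{2}\cdots$). Your assertion that $\Sch(P^\bullet)$ is K-flat over $k$ ``being the image of a K-projective complex under an exact coproduct-preserving functor'' is not a proof; what makes it work is the structure of K-projective complexes as homotopy colimits of bounded-above complexes of projectives (equivalently, your truncation alternative). Once you commit to that route you have in effect the paper's proof: establish the isomorphism on suitably bounded complexes (the paper does this via the Kashiwara--Schapira notion of a $\otimes$-projective, resp.\ $\hom$-injective, subcategory generated by $\Sch(\Gamma^{d,V})$, i.e.\ exactness of totalizations of half-plane double complexes, which is where the $k$-projectivity of $(V^*)^{\otimes d}$ enters), and then extend to ${\rm D}(\RepG)$ by B\"okstedt--Neeman homotopy colimits in the tensor/first variable and homotopy limits in the second variable of ${\bf R}\Bbb H$, using that the derived bifunctors preserve the relevant (co)products. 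So: correct skeleton, but the K-injectivity claim must be removed and the K-flatness/unboundedness step must be carried out via the truncation--hocolim argument rather than asserted.
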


\begin{proof} We first recall some generalities, see~\cite{Spalt88, BN} for these matters. 
$\RepG$ and $kS_d$-Mod are Grothendieck categories. 
There are the usual quotient functors 
${\rm K} (\RepG) \overset {Q_{1}} \longrightarrow {\rm D} (\RepG)$ and  ${\rm K}(kS_d$-Mod$) \overset {Q_{2}} \longrightarrow {\rm K}(kS_d$-Mod). 
Each $Q_{j}$ has a left adjoint ${\bf p}_{j}$ (``K-projective resolution" -- used to calculate left derived functors) 
and a right adjoint ${\bf i}_{j}$ (``K-injective resolution" -- used to calculate right derived functors).
Arbitrary products and coproducts exist in ${\rm K} (\RepG)$ and ${\rm K}(kS_d$-Mod),
which pass to the respective derived categories by the quotient functors. 
Coproducts are preserved by ${\bf p}_{j}$ and products by ${\bf i}_{j}$.

\medskip

The functors $\Sch$, $\intP, \Bbb H, \otimes$, and $\hom$ pass to respective homotopy categories
(where one uses the same notation for them) and have derived functors. 
As $\Sch$ is exact, it extends to the derived category by termwise application, the extension still denoted by $\Sch$. 
It preserves (co)products.
Derived bifunctors of $\intP$ and $\otimes$ (respectively, $\Bbb H$ and $\hom$) are calculated by taking 
double complexes obtained from appropriate resolutions and forming the corresponding total complexes via coproduct (respectively, product).
$\intP$ and $\otimes$ preserve coproducts in each variable. 
$\Bbb H$ and $\hom$ preserve products in the second variable and turn coproducts in the first variable into products.
These properties pass to the respective derived functors because, e.g., ${\bf R} \Bbb H (X, -) \simeq Q_1 \circ \Bbb H (X, -) \circ {\bf i}_1$.
(To avoid clutter we will continue to denote $Q_1X$ in ${\rm D} (\RepG)$ by $X$.)

\medskip

First we prove the result about tensor products.
By Theorem~\ref{thm:main} we have the following isomorphism of bifunctors 
from $\RepG \times  \RepG$ to $kS_d$-Mod.
$$F:=\Sch \circ (- \intP-) \simeq G \circ (\Sch\times\Sch) \quad {\rm where} \quad G:=-\otimes- \ $$
As $\Sch$ is exact,  ${\bf L}F \simeq \Sch\circ(-\overset{\bf L}\intP -)$.
By the universal property of the derived bifunctor ${\bf L}F$, there is a natural transformation of triangulated bifunctors
$$\eta: {\bf L} G \circ (\Sch\times\Sch) = (-\overset{\bf L}\otimes -) \circ (\Sch\times\Sch) \to {\bf L}F$$ 
%
We will prove that $\eta$ is an isomorphism in two steps. 

\medskip

{\it Step 1.} We claim that the restriction of $\eta$ to ${\rm D}^- (\RepG)\times {\rm D}^- (\RepG)$ is an isomorphism. 
Consider the full subcategory $\mathcal{P}$ of $\RepG$ consisting of direct sums of representable objects $\Gamma^{d,V}$.
We observe below that $\Sch(\mathcal{P}) \times  \Sch(\RepG)$ is $(-\otimes -)$-projective in the terminology of~\cite[Definitions 10.3.9, 13.4.2]{KS}.
This would prove the claim by a suitable analogue of Grothendieck spectral sequence as formulated in~\cite[Proposition 13.3.13(ii)]{KS}.
(We need the analogue when, in the terminology there, $F$ is a product of two functors between abelian categories and $F'$ is a bifunctor.)

\medskip 

To see the $(-\otimes -)$-projectivity of $\Sch(\mathcal{P}) \times  \Sch(\RepG)$, 
first note that direct sums of objects of the form $\Sch(\Gamma^{d,V}) \simeq (V^*)^{\otimes d}$ form a generating subcategory of $kS_d$-Mod 
because, e.g., the projective generator $kS_d \simeq \Sch(\Gamma^{1^d}) \overset {\oplus} \hookrightarrow \Sch(\Gamma^{d,k^d})$. 
So one needs to check that for $X$ and $Y$ in ${\rm K^-}(kS_d$-Mod), the total complex $X \otimes Y$ is exact whenever one
of the following two conditions is met.
(i) all nonzero terms of $X$ are of the form $\oplus_{\alpha} (V_{\alpha}^*)^{\otimes d}$ and $Y$ is exact; or
(ii) $X$ is an exact complex all of whose nonzero terms are of the form $\oplus_{\alpha} (V_{\alpha}^*)^{\otimes d}$ and $Y$ is arbitrary.
As $(V_{\alpha}^*)^{\otimes d}$ is a projective $k$-module, 
this amounts to exactness of the total complex of a fourth quadrant double complex with exact columns/rows.

\medskip

{\it Step 2.} Fix an $X$ in ${\rm D}^-(\RepG)$. 
Let $\mathcal{C}$ be the full subcategory of ${\rm D}(\RepG)$ whose objects are those $Y$ for which $\eta(X,Y)$ is an isomorphism. 
Clearly $\mathcal{C}$ is a triangulated subcategory and by Step 1 it contains ${\rm D}^- (\RepG)$.
As ${\bf L} F$ and ${\bf L} G \circ (\Sch\times\Sch)$ preserve coproducts in each slot, $\mathcal{C}$ is closed under formation of arbitrary coproducts. 
It follows that $\mathcal{C}$ is all of ${\rm D}(\RepG)$
because any object $Y \simeq$ the cone of ($1 - \, $shift) endomorphism of the coproduct 
$\oplus_{n \geq 0}  \, Y^{\leq n}$ of bounded above truncations of $Y$~\cite{Spalt88},~\cite[Section 2]{BN}.
Repeating the argument by fixing an arbitrary $Y$ gives the result for arbitrary $X$ and $Y$.
 
\medskip
 
The result about $\Bbb H$ is obtained similarly. 
We indicate only the changes.
By Theorem~\ref{thm:main} we have the isomorphism of bifunctors 
from $(\RepG)^{op} \times  \RepG$ to $kS_d$-Mod
$$F':=\Sch \circ \, \Bbb H(- , -) \simeq G' \circ (\Sch\times\Sch) \quad {\rm where} \quad G':= \hom(-, -) \ .$$
leading to a natural transformation of triangulated bifunctors
$$\eta':  {\bf R}F' \to {\bf R} G' \circ (\Sch\times\Sch) = {\bf R}\hom (-, -) \circ (\Sch\times\Sch).$$ 
As before, the restriction of $\eta'$ to ${\rm D}^- (\RepG) \times {\rm D}^+ (\RepG)$ is an isomorphism. 
One sees this by using $\hom(- , -)$-injectivity of $\Sch(\mathcal{P}) \times  \Sch(\RepG)$, which follows by
exactness of the total complex of a first quadrant double complex with exact columns/rows.

\medskip

The triangulated bifunctors ${\bf R} F'$ and ${\bf R} G' \circ (\Sch\times\Sch)$ preserve products in the second slot and turn coproducts in the first slot into products.
By using homotopy limits in the second slot and homotopy colimits in the first slot, one gets as before that $\eta'$ is an isomorphism on 
all of ${\rm D} ((\RepG)^{op}) \times {\rm D} (\RepG)$.
\end{proof}

We note an interesting consequence of Theorem~\ref{thm:intderived} for modular representations. 
If $k$ is a field then the Kronecker product is exact. 
Therefore, for $X$ and $Y$ in $\RepG$,
$$\Sch(\higheri{i}(X\overset{\bf L}\intP Y)) \simeq 
\higheri{i} (\Sch(X\overset{\bf L}\intP Y)) \simeq 
\higheri{i}(\Sch(X)\overset{\bf L}\otimes \Sch(Y))=0 \ \text {\rm for all} \ i \neq 0.$$ 
This is uninteresting when the 
char $k = 0$, as in that case $\RepG$ is semisimple and therefore $\intP$ is also exact. 
However, if char $k = p > 0$, then $\RepG$ is not semisimple. 
Its simple objects $L_k(\lambda)$ are still indexed by all partitions $\lambda$ of $d$. 
For $p$-restricted partitions $\lambda$, the $kS_d$-modules $\Sch(L_k(\lambda))$ form a complete set of simple objects and 
for other $\lambda$, one has $\Sch(L_k(\lambda)) =0$. 
Thus we get
\begin{corollary}\label{cor:restricted}
If $k$ is a field of characteristic $p$ then for finitely generated $X,Y\in \XRepG^{d}_{k},$ all composition factors of 
$\higheri{i}(X\overset{\bf L}\intP Y)$ have non $p$-restricted highest weights for $i \neq 0$.
\end{corollary}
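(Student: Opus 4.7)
The plan is short because Theorem~\ref{thm:intderived} has already done the heavy lifting. The paragraph preceding the statement records the key vanishing: since $k$ is a field, the Kronecker product of $kS_d$-modules is exact, so $\Sch(X) \overset{\bf L}\otimes \Sch(Y)$ is concentrated in cohomological degree $0$; combined with Theorem~\ref{thm:intderived} and the exactness of $\Sch$, this gives
$$\Sch\bigl(H^i(X \overset{\bf L}\intP Y)\bigr) \;\simeq\; H^i\bigl(\Sch(X) \overset{\bf L}\otimes \Sch(Y)\bigr) \;=\; 0 \quad \text{for } i \neq 0.$$
So the task reduces to extracting from ``$\Sch(M)=0$'' the statement about composition factors of $M := H^i(X \overset{\bf L}\intP Y)$.

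Next I would verify that $M$ has finite length, so that the phrase ``composition factors'' is meaningful. Finite generation of $X$ and $Y$, combined with the equivalence $\RepG \simeq S_k(d,d)\text{-Mod}$ from Proposition~\ref{thm:FS} and item~\ref{finite-criterion} of Section~\ref{subsec:yoneda}, lets me resolve each of $X$ and $Y$ by finite direct sums of copies of $\Gamma^{d,k^d}$. Since $\Gamma^{d,V}\intP\Gamma^{d,W}\simeq\Gamma^{d,V\otimes W}$ is again a finitely generated projective, $X\overset{\bf L}\intP Y$ is represented in $\mathrm{D}(\RepG)$ by a complex of finitely generated projectives, so each cohomology module is finitely generated; over the finite-dimensional algebra $S_k(d,d)$ this means $M$ has finite length.

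Finally I would conclude using the standard behaviour of $\Sch$ on simples: the simple objects of $\RepG$ are the $L_k(\lambda)$ for partitions $\lambda$ of $d$, and $\Sch(L_k(\lambda))$ is a simple $kS_d$-module when $\lambda$ is $p$-restricted and vanishes otherwise. Exactness of $\Sch$ applied to a composition series of $M$ yields a filtration of $\Sch(M)$ whose successive quotients are the $\Sch(L_k(\lambda_j))$ for the composition factors $L_k(\lambda_j)$ of $M$; the vanishing $\Sch(M)=0$ forces every one of these quotients to be zero, i.e.\ every $\lambda_j$ is non $p$-restricted. There is no serious obstacle here: the only point requiring care is the finite-length argument, without which ``composition factors'' would be ill-defined. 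Everything else is a direct consequence of Theorem~\ref{thm:intderived} and the vanishing of $\Sch$ on non $p$-restricted simples.
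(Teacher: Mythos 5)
Your argument is correct and follows essentially the same route as the paper: the vanishing $\Sch(H^i(X\overset{\bf L}\intP Y))=0$ for $i\neq 0$ from Theorem~\ref{thm:intderived} together with exactness of $\Sch$ and the Kronecker product, and then the fact that $\Sch(L_k(\lambda))=0$ precisely for non $p$-restricted $\lambda$. Your explicit finite-length check is a harmless extra precaution the paper leaves implicit (and is not strictly needed, since exactness of $\Sch$ already kills any simple subquotient of a module annihilated by $\Sch$).
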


A compatibility of the internal tensor product and internal hom with adjoints of the Schur functor was discovered recently by Reischuk.
We prove below somewhat stronger versions of her results (compare~\cite[Corollaries 4.4 and 5.5]{Rei16}) by different means using Theorem~\ref{thm:main} and standard adjunctions.
Moreover, being purely formal, our method extends to give new results in the derived setting by using Theorem~\ref{thm:intderived}. 

\begin{corollary}\label{cor:adjointinternal}
Let $M,N\in kS_{d}$-{\rm Mod}. Let $X$ be in $\XRepG^{d}_{k}$ such that $\Sch(X) \simeq M$. 
Then
$$\mathcal{L}(M\otimes N)\simeq X \intP \mathcal{L}N  \qquad {\rm and} \qquad
\mathcal{R} (\hom(M,N)) \simeq \mathbb{H}(X,\mathcal{R}N).$$ 
The isomorphisms are functorial in $N$.  
They are functorial in $M$ as well if $X = {\mathcal L} M$ or if $X = {\mathcal R} M$.

\end{corollary}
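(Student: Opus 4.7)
The plan is to prove both isomorphisms purely formally by applying the Yoneda lemma in $\RepG$, using only Theorem~\ref{thm:main}, the $(\intP, \Bbb H)$ adjunction of Proposition~\ref{prop:inthom}, the $(\mathcal{L}, \Sch, \mathcal{R})$ adjunctions recalled in Section~\ref{subsec:schfun}, and the standard tensor-hom adjunction in $kS_d$-Mod. The hypothesis $\Sch(X) \simeq M$ feeds into these chains via Theorem~\ref{thm:main} applied to $X$.

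For the first isomorphism, I would fix an arbitrary $Y \in \RepG$ and compute $\hom_{\RepG}(X \intP \mathcal{L} N, Y)$ as a chain of natural isomorphisms: first apply the $(\intP, \Bbb H)$ adjunction to pass to $\hom_{\RepG}(\mathcal{L} N, \Bbb H(X, Y))$; then use $\mathcal{L} \dashv \Sch$ to get $\hom_{kS_d}(N, \Sch\, \Bbb H(X, Y))$; then invoke Theorem~\ref{thm:main} together with $\Sch(X) \simeq M$ to rewrite the target as $\hom(M, \Sch Y)$; finally apply the tensor-hom adjunction in $kS_d$-Mod to obtain $\hom_{kS_d}(M \otimes N, \Sch Y)$. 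A single application of $\mathcal{L} \dashv \Sch$ identifies this last expression with $\hom_{\RepG}(\mathcal{L}(M \otimes N), Y)$, and Yoneda concludes.

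The second isomorphism is dual in flavor. For an arbitrary $Y \in \RepG$, I would compute $\hom_{\RepG}(Y, \Bbb H(X, \mathcal{R} N))$ by moving $X$ across using the $(\intP, \Bbb H)$ adjunction to get $\hom_{\RepG}(Y \intP X, \mathcal{R} N)$; applying $\Sch \dashv \mathcal{R}$ to get $\hom_{kS_d}(\Sch(Y \intP X), N)$; using symmetry of $\intP$ together with Theorem~\ref{thm:main} and $\Sch(X) \simeq M$ to rewrite this as $\hom_{kS_d}(\Sch Y \otimes M, N)$; and finishing with the tensor-hom adjunction and $\Sch \dashv \mathcal{R}$ to recognize this as $\hom_{\RepG}(Y, \mathcal{R}(\hom(M, N)))$. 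Yoneda again closes the argument.

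Functoriality in $N$ is automatic since every step in both chains is natural in $N$. Functoriality in $M$ is the only subtle point: the intermediate expressions involve $X$, not $M$ directly, so to lift a morphism $M \to M'$ one needs a natural choice of lift $X \to X'$. This is exactly what the canonical choices $X = \mathcal{L} M$ or $X = \mathcal{R} M$ provide, via the functoriality of the adjoints themselves. I do not anticipate any genuine obstacle; the whole argument is a formal shuffling of adjunctions, with Theorem~\ref{thm:main} supplying the one nontrivial input.
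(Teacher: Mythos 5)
Your proposal is correct and follows essentially the same route as the paper: a Yoneda argument against a test object, chaining the $\mathcal L \dashv \Sch \dashv \mathcal R$ adjunctions, the $\intP \dashv \Bbb H$ adjunction, the tensor-hom adjunction in $kS_d$-Mod, and Theorem~\ref{thm:main} with the hypothesis $\Sch(X)\simeq M$, and your handling of functoriality in $M$ via $X=\mathcal L M$ or $X=\mathcal R M$ matches the paper's appeal to $\Sch\circ\mathcal L\simeq \mathrm{id}\simeq\Sch\circ\mathcal R$. The only cosmetic difference is that you run the chains in the opposite direction (and for the second isomorphism test against the first slot of $\Bbb H$ rather than starting from $\mathcal R(\hom(M,N))$), which changes nothing of substance.
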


\begin{proof}
The second result follows from the Yoneda lemma by the functorial isomorphisms
\begin{align*}
\hom_{\RepG}(Z,\mathcal{R}(\text{Hom}(M,N)))  
& \simeq  \text{Hom}_{kS_{d}}(\text{Sch}(Z),\text{Hom}(M,N)) && \text {by }  \Sch \vdash \mathcal {R} \\
& \simeq  \text{Hom}_{kS_{d}}(\text{Sch}(Z)\otimes M,N) &&  \text {by }  \otimes \vdash \hom \\ 
& \simeq  \text{Hom}_{kS_{d}}(\text{Sch}(Z)\otimes\text{Sch}(X),N) && \text {by choice of } X \\
& \simeq  \text{Hom}_{kS_{d}}(\text{Sch}(Z\intP X),N) &&  \text {by Theorem } \ref{thm:main} \\
& \simeq  \text{Hom}_{\RepG}(Z\intP X, \mathcal{R}N)  &&  \text {by }  \Sch \vdash \mathcal {R} \\
& \simeq  \text{Hom}_{\RepG}(Z,\mathbb{H}(X,\mathcal{R}N)) &&  \text {by } \intP \vdash \Bbb H.
\end{align*}
The proof of the first result is entirely parallel by using a test object in the second slot, see the next proof. 
The last sentence follows from the isomorphisms $\Sch \circ \, \mathcal L \simeq id \simeq \Sch \circ \, \mathcal R$.
\end{proof}

We show that the same result is true at the derived level by using Theorem~\ref{thm:intderived}. 
To simplify notation, we will denote the left derived functor of $\mathcal L$ by $\mathcal L'$ instead of ${\bf L} \mathcal L$
and likewise the right derived functor of $\mathcal R$ by $\mathcal R'$ instead of ${\bf R} \mathcal R$.

\begin{corollary}\label{cor:derivedadjointinternal}
Let $M,N \in {\rm D(}kS_{d}$-{\rm Mod)}. Let $X$ be in ${\rm D}(\XRepG^{d}_{k})$ such that $\Sch(X) \simeq M$. Then
 $$\mathcal{L}' (M \overset{{\bf L}}\otimes N)\simeq X \overset{{\bf L}}\intP \mathcal{L}'N \qquad {\rm and} \qquad
 \mathcal{R}' ( {\bf R}{\rm Hom}(M,N) ) \simeq {\bf R} \mathbb{H}(X,\mathcal{R}'N).$$ 
The isomorphisms are functorial in $N$  
and functorial in $M$ as well if $X = {\mathcal L}' M$ or if $X = {\mathcal R}' M$.
\end{corollary}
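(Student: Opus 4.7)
The plan is to mimic the purely formal Yoneda–adjunction chain used in the proof of Corollary~\ref{cor:adjointinternal}, replacing each bifunctor and adjunction by its derived analogue and invoking Theorem~\ref{thm:intderived} in place of Theorem~\ref{thm:main}.

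First I would collect the derived adjunctions needed. Since $\Sch$ is exact it passes termwise to a triangulated functor between unbounded derived categories, and the adjoint pairs $\mathcal{L}\dashv \Sch$ and $\Sch\dashv \mathcal{R}$ derive to $\mathcal{L}'\dashv \Sch$ and $\Sch\dashv \mathcal{R}'$ (standard, using the K-projective/K-injective formalism already invoked in the proof of Theorem~\ref{thm:intderived}). Similarly, the bifunctor adjunction $\intP\dashv \Bbb H$ on $\RepG$ derives to $\overset{{\bf L}}\intP\dashv {\bf R}\Bbb H$ on ${\rm D}(\RepG)$, and $\otimes\dashv \hom$ on $kS_d$-Mod derives to $\overset{{\bf L}}\otimes\dashv {\bf R}\hom$ on ${\rm D}(kS_d\text{-Mod})$. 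I would also record that $\Sch\circ\mathcal{L}'\simeq{\rm id}\simeq\Sch\circ\mathcal{R}'$, which is inherited from the corresponding underived identities of Section~\ref{subsec:schfun} since $\Sch$ is exact.

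With these in hand, for an arbitrary test object $Z\in {\rm D}(\RepG)$ I would write the chain
\begin{align*}
\hom_{{\rm D}(\RepG)}(Z,\mathcal{R}'({\bf R}{\rm Hom}(M,N)))
&\simeq \hom_{{\rm D}(kS_d)}(\Sch(Z),{\bf R}{\rm Hom}(M,N)) \\
&\simeq \hom_{{\rm D}(kS_d)}(\Sch(Z)\overset{{\bf L}}\otimes M,N) \\
&\simeq \hom_{{\rm D}(kS_d)}(\Sch(Z)\overset{{\bf L}}\otimes \Sch(X),N) \\
&\simeq \hom_{{\rm D}(kS_d)}(\Sch(Z\overset{{\bf L}}\intP X),N) \\
&\simeq \hom_{{\rm D}(\RepG)}(Z\overset{{\bf L}}\intP X,\mathcal{R}'N) \\
&\simeq \hom_{{\rm D}(\RepG)}(Z,{\bf R}\Bbb H(X,\mathcal{R}'N)),
\end{align*}
using in order $\Sch\dashv \mathcal{R}'$, $\overset{{\bf L}}\otimes\dashv {\bf R}\hom$, the hypothesis $\Sch(X)\simeq M$, Theorem~\ref{thm:intderived}, $\Sch\dashv \mathcal{R}'$ again, and $\overset{{\bf L}}\intP\dashv {\bf R}\Bbb H$. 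The Yoneda lemma then yields $\mathcal{R}'({\bf R}{\rm Hom}(M,N))\simeq {\bf R}\Bbb H(X,\mathcal{R}'N)$. The first statement $\mathcal{L}'(M\overset{{\bf L}}\otimes N)\simeq X\overset{{\bf L}}\intP \mathcal{L}'N$ is obtained by the symmetric argument, placing the test object in the second slot of $\mathcal{L}'$ and $\overset{{\bf L}}\intP$. Functoriality in $N$ is automatic from this Yoneda presentation; functoriality in $M$ under $X=\mathcal{L}'M$ or $X=\mathcal{R}'M$ follows from $\Sch\circ\mathcal{L}'\simeq{\rm id}\simeq\Sch\circ\mathcal{R}'$.

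The main obstacle is not the chain itself but checking cleanly that the bifunctor adjunctions $\overset{{\bf L}}\intP\dashv {\bf R}\Bbb H$ and $\overset{{\bf L}}\otimes\dashv {\bf R}\hom$ are honest natural isomorphisms on the full \emph{unbounded} derived categories, not just on bounded subcategories. This is exactly the type of unbounded-complex issue handled in the proof of Theorem~\ref{thm:intderived} via truncations and homotopy (co)limits à la Spaltenstein and B\"okstedt–Neeman; once that machinery is invoked (or once one appeals directly to Theorem~\ref{thm:intderived} as a black box inside the chain), the remainder of the argument is entirely formal.
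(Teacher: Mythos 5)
Your proposal is correct and follows essentially the same route as the paper: derive all the underlying adjunctions via the K-projective/K-injective formalism (the paper cites Krause's Proposition~4.1 for $\overset{\bf L}\intP \dashv {\bf R}\mathbb{H}$), run the same Yoneda--adjunction chain as in Corollary~\ref{cor:adjointinternal} with Theorem~\ref{thm:intderived} replacing Theorem~\ref{thm:main}, and obtain the last assertion by deriving $\Sch \circ \mathcal L \simeq \mathrm{id} \simeq \Sch \circ \mathcal R$ using exactness of $\Sch$. The only cosmetic difference is that you spell out the $\mathcal R'$ isomorphism while the paper sketches the $\mathcal L'$ one, and your closing worry about unboundedness is already absorbed into Theorem~\ref{thm:intderived} and the standard derived-adjunction formalism, exactly as you suggest.
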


 \begin{proof} 
 
In general, an adjunction $F \vdash G$ of additive functors between Grothedieck categories leads 
to an adjunction ${\bf L} F \vdash {\bf R} G$ between the corresponding derived categories. 
(The adjunction first passes to homotopy categories and then one combines with the adjunctions ${\bf p} \vdash Q \vdash {\bf i}$, 
e.g. see~\cite[Proposition 4.1]{Krause13} for the case of $(X \intP -)  \vdash \Bbb H (X, -)$.)
Thus we may repeat the argument in the proof of Corollary~\ref{cor:adjointinternal} by using derived versions of all adjunctions and Theorem~\ref{thm:intderived}.
We sketch this for the first isomorphism.
\begin{align*}
\hom_{{\rm D}(\RepG)}(\mathcal{L}' (M \overset{{\bf L}}\otimes N), Z)  
& \simeq  \text{Hom}_{{\rm D}(kS_{d}\text {-Mod})}(N, {\bf R}\text{Hom}(M, \text{Sch}(Z)) 
    &&  \text {by }   \mathcal {L}' \vdash \Sch, \  \overset{\bf L}  \otimes \vdash {\bf R}\text{Hom} \\ 
& \simeq  \text{Hom}_{{\rm D}(kS_{d}\text {-Mod})}(N, \text{Sch}({\bf R}\Bbb{H}(X, Z)) 
    && \text {by Theorem } \ref{thm:intderived}\\  
& \simeq  \text{Hom}_{{\rm D}({\RepG})}(X \overset{{\bf L}}\intP \mathcal{L}'N, Z) 
    &&  \text {by }   \mathcal {L}' \vdash \Sch, \  \overset{\bf L}  \intP \vdash {\bf R}\Bbb{H}.
\end{align*}
Finally, as $\Sch$ is exact, deriving $\Sch \circ \, \mathcal L \simeq id \simeq \Sch \circ \, \mathcal R$ gives $\Sch \circ \, \mathcal L' \simeq id \simeq \Sch \circ \, \mathcal R '$. This gives the last assertion. 
\end{proof}


\subsection{Internal tensor product and weight spaces}\label{sec:paramet}

In this section we compute, for an arbitrary $X$ in $\RepG$, the internal tensor product $X \intP \Gamma^{\lambda}$. 
Taking $X = \Gamma^{\mu}$, we answer a question posed by Krause 
by finding an explicit expression for $\Gamma^{\mu} \intP \Gamma^{\lambda}$. 
Similar calculations of $\wedge^\lambda \intP \wedge^\mu$, $\sym^\lambda \intP \sym^\mu$ and $\wedge^\lambda \intP \sym^\mu$ follow.

\medskip

Finding of $X \intP \Gamma^{\lambda}$ is an important step to calculate $X \intP Y$ for other $Y$:  one often tries to 
resolve $Y$ by direct sums of functors of the type  $\Gamma^{\lambda}$ and then one can use right exactness of $\intP$.
 
\medskip
 
Note that for $X$ in $\RepG$, $X_V(W) := X(V \otimes W)$ is functorial in each of the two variables $V$ and $W$. 
In particular $X(V \otimes W)$ has a $GL(V) \times GL(W)$-action. 
When $V=k^n$, any weight space of $X(k^n \otimes W)$ with respect to the action of $GL(k^n)$ is still functorial in $W$ and 
thus yields a strict polynomial functor of degree $d$. 
It will be convenient to formalize this as follows. 
(Compare the equivalent definition of $F^\lambda$ in~\cite[Section 5]{Chalupnik05}.)
\begin{definition}\label{defn:param} 
Let $X$ be a strict polynomial functor of degree $d$. Let $\lambda \in \Lambda(n, d)$. 
Then we define a strict polynomial functor $X^{\lambda}$ by 
$X^{\lambda}(V) = X(k^{n}\otimes V)_\lambda$, i.e., the $\lambda$-weight space of the $GL(k^{n})$-module $X(k^{n}\otimes V).$ 
\end{definition}
Observe that $(\Gamma^d)^\lambda$ is precisely $\Gamma^\lambda$ and likewise for symmetric and exterior powers. 
We have the following basic calculation, generalizing~\cite[Proposition 3.4, Corollary 3.7]{Krause13}.
\begin{lemma}\label{lm:paramet}
For $\lambda\in\Lambda(n,d)$, we have $X \intP \Gamma^\lambda \simeq  X^{\lambda}.$
\end{lemma}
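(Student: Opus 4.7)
The plan is to realize $\Gamma^\lambda$ as a direct summand of the representable functor $\Gamma^{d,k^n}$ cut out by a weight-space idempotent, and then transport this decomposition across the basic isomorphism $X \intP \Gamma^{d,V}\simeq X^V$ recorded in Section~\ref{subsec:inthom}. From item~\ref{item:wtspace} we have the splitting $\Gamma^{d,k^n}\simeq \bigoplus_{\lambda\in\Lambda(n,d)}\Gamma^{\lambda}$, which is matched under Yoneda with the weight decomposition $\hom_{\RepG}(\Gamma^\lambda, Y) \simeq Y(k^n)_\lambda$ for every $Y$. The splitting is realized by orthogonal idempotents $e_\lambda \in \eendo_{\RepG}(\Gamma^{d,k^n}) = S_k(n,d)$, where $e_\lambda$ is the projection onto the $\lambda$-weight space for the diagonal torus $T\subset GL(k^n)$.

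Next I would tensor with $X$ on the left via $\intP$. Because $\intP$ is biadditive (indeed a left adjoint in each slot), applying $(X \intP -)$ to the above decomposition gives $X \intP \Gamma^{d,k^n}\simeq \bigoplus_{\lambda\in\Lambda(n,d)} X\intP\Gamma^{\lambda}$. On the other hand, the isomorphism $X \intP \Gamma^{d,V} \simeq X^V$ of Section~\ref{subsec:inthom} is natural in $V$, so it intertwines the actions of $\eendo_{\Gamma^d P_k}(k^n) = S_k(n,d)$ on both sides when $V=k^n$. Under this isomorphism the idempotent $e_\lambda$ corresponds on $X^{k^n}(V)\simeq X(k^n\otimes V)$ to the projection onto the $\lambda$-weight subspace for the natural torus action, yielding $X \intP \Gamma^\lambda \simeq X(k^n \otimes -)_\lambda = X^\lambda$.

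The main step to verify is the compatibility of the two $S_k(n,d)$-actions on $X \intP \Gamma^{d,k^n}$ and on $X^{k^n}$ under the isomorphism, equivalently the naturality of $X \intP \Gamma^{d,V}\simeq X^V$ in $V$. This is a formal consequence of the Kan-extension construction of $\intP$: the functor $V \mapsto \Gamma^{d,V}$ is functorial in $V$ via the Yoneda embedding $V \mapsto \hom_{\Gamma^d P_k}(V,-)$, and the adjunction producing $X \intP \Gamma^{d,V}\simeq X^V$ is by construction natural in $V$. Once this naturality is in hand, matching the idempotent decompositions on the two sides is routine and yields the lemma, with one small sign check (already absorbed into the Yoneda normalization of item~\ref{item:wtspace}) ensuring that weights are identified as stated rather than up to negation.
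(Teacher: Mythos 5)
Your proof is correct, but it follows a genuinely different route from the paper's. The paper first verifies the isomorphism for representable $X=\Gamma^{d,V}$ (both sides become the $\lambda$-weight space of $\Gamma^{d}(V^{*}\otimes k^{n}\otimes U)$) and then extends to arbitrary $X$ by writing $X$ as a colimit of representables, using that $\intP$ preserves colimits and that $Y\mapsto Y^{\lambda}$ is exact and preserves direct sums. You instead keep $X$ arbitrary throughout: you split the single representable $\Gamma^{d,k^{n}}\simeq\bigoplus_{\lambda}\Gamma^{\lambda}$ by the weight idempotents $e_{\lambda}\in S_{k}(n,d)$ and transport this splitting across the isomorphism $X\intP\Gamma^{d,k^{n}}\simeq X^{k^{n}}$, so the whole burden shifts to the $S_{k}(n,d)$-equivariance of that isomorphism (naturality in $V$) and the identification of the image of $e_{\lambda}$ with the $\lambda$-weight space of $X(k^{n}\otimes -)$. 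Both arguments rest on Krause's $\Gamma^{d,V}\intP Y\simeq Y^{V}$; yours avoids the colimit step over $X$ (that work is in effect absorbed into the general-$Y$ form of Krause's lemma) at the cost of the equivariance check, which you correctly flag as the main point and which is precisely the content of the paper's subsequent remark that $X\intP\sigma$ acts as $X(\phi^{*}\otimes \mathrm{id})$ when $\sigma\colon\Gamma^{d,V}\to\Gamma^{d,W}$ corresponds to $\phi\in\Gamma^{d}\hom_{k}(W,V)$; your ``sign check'' is harmless, since the transpose anti-automorphism of $S_{k}(n,d)$ fixes each weight idempotent $e_{\lambda}$, so no negation or reordering of weights occurs. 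The paper's colimit argument is the template it reuses throughout, while your argument produces all $\lambda\in\Lambda(n,d)$ simultaneously from the single evaluation at $V=k^{n}$.
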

\begin{proof}
First let $X=\Gamma^{d,V}$. 
We have $(\Gamma^{d,V})^{\lambda}(U) =$ the $\lambda$-weight space of the $GL(k^n)$-module
$\Gamma^{d,V}(k^n \otimes U ) =  \Gamma^{d}(V^{*} \otimes k^n \otimes  U)_\lambda$. 
Using~\cite[Lemma 2.5]{Krause13}, 
$(\Gamma^{d,V} \intP \Gamma^\lambda) (U) \simeq \Gamma^\lambda(\Gamma^{d,V}(U)) 
= \Gamma^\lambda( V^{*} \otimes U) = \Gamma^{d}(k^n \otimes V^{*} \otimes  U)_\lambda$, 
where the $\lambda$-weight space is again taken for the $GL(k^n)$-action.
In general we again use $X\simeq \nonrep{V}{X} \Gamma^{d,V}$ and check compatibility with colimits. 
We have
$$
\bigg( \nonrep{V}{X} \Gamma^{d,V} \bigg) \intP \; \Gamma^{\lambda}  
\quad \simeq \quad \nonrep{V}{X}(\Gamma^{d,V} \intP \Gamma^{\lambda})
\quad \simeq \quad \nonrep{V}{X}(\Gamma^{d,V})^{\lambda}
\quad \simeq \quad \bigg(\nonrep{V}{X} \Gamma^{d,V}\bigg)^{\lambda}.
$$
The first isomorphism is true since $\intP$ commutes with colimits. 
The second follows from the special case proved above. 
Finally, the functor $Y \rightarrow Y^{\lambda}$ preserves colimits, since it is exact 
and preserves arbitrary direct sums in the abelian category $\RepG$. 
\end{proof}

Let $\mu \in \Lambda(n,d)$ and let $\lambda \in \Lambda(m,d)$. 
Let $S$ denote the set of $n$ by $m$ matrices with non-negative integer entries, with row sums $\mu$ and column sums $\lambda$. 
Every such matrix $S$ naturally gives us a $\nu \in \Lambda(mn,d)$. 
\begin{proposition} \label{prop:lambdamu}
Let $\mu \in \Lambda(n,d)$ and let $\lambda \in \Lambda(m,d)$. Then,
\begin{center}
$\Gamma^{\mu} \intP \Gamma^{\lambda} \simeq \underset{\nu \in S}\bigoplus\Gamma^{\nu}$.
\end{center}
\end{proposition}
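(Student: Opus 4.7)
The plan is to reduce the computation to a weight-space extraction via Lemma~\ref{lm:paramet} and then apply the exponential property of the divided power algebra twice to decompose everything into a direct sum indexed by matrices.

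First, by Lemma~\ref{lm:paramet} applied to $X = \Gamma^{\mu}$, we have $\Gamma^{\mu} \intP \Gamma^{\lambda} \simeq (\Gamma^{\mu})^{\lambda}$, where by Definition~\ref{defn:param} this is the functor $V \mapsto \Gamma^{\mu}(k^{m}\otimes V)_{\lambda}$, the $\lambda$-weight space with respect to the $GL(k^{m})$-action coming from the left tensor factor. So I only need to identify this weight space functorially in $V$.

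Next, by definition $\Gamma^{\mu}(k^{m}\otimes V) = \Gamma^{\mu_{1}}(k^{m}\otimes V) \otimes \cdots \otimes \Gamma^{\mu_{n}}(k^{m}\otimes V)$. Write $k^{m} = \bigoplus_{i=1}^{m} k e_{i}$, so $k^{m}\otimes V = \bigoplus_{i=1}^{m} V$. Applying the exponential isomorphism $\Gamma(U \oplus W) \simeq \Gamma(U) \otimes \Gamma(W)$ iteratively inside each factor $\Gamma^{\mu_{j}}(k^{m}\otimes V)$, I obtain
\[
\Gamma^{\mu_{j}}(k^{m}\otimes V) \;\simeq\; \bigoplus_{\alpha^{(j)} \in \Lambda(m,\mu_{j})} \Gamma^{\alpha^{(j)}_{1}}(V)\otimes\cdots\otimes\Gamma^{\alpha^{(j)}_{m}}(V),
\]
where the summand indexed by $\alpha^{(j)}$ is precisely the $\alpha^{(j)}$-weight space for the $GL(k^{m})$-action on $\Gamma^{\mu_{j}}(k^{m}\otimes V)$ (since the $i$-th factor $\Gamma^{\alpha^{(j)}_{i}}(V)$ sits inside the $e_{i}$-piece and is acted on by $GL(k^{m})$ through the character $t \mapsto t_{i}^{\alpha^{(j)}_{i}}$).

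Now take the tensor product over $j$ and extract the $\lambda$-weight space. The total $GL(k^{m})$-weight of the summand indexed by $(\alpha^{(1)},\ldots,\alpha^{(n)})$ is $\sum_{j} \alpha^{(j)}$, so the condition $\lambda$-weight gives $\sum_{j} \alpha^{(j)} = \lambda$, while the membership $\alpha^{(j)} \in \Lambda(m,\mu_{j})$ forces $\sum_{i} \alpha^{(j)}_{i} = \mu_{j}$. Assembling the $\alpha^{(j)}_{i}$'s into an $n \times m$ matrix $A$ with entries $A_{j,i} := \alpha^{(j)}_{i}$, these two conditions say exactly that $A$ has row sums $\mu$ and column sums $\lambda$, i.e.\ $A \in S$. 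The summand attached to $A$ is $\bigotimes_{j,i} \Gamma^{A_{j,i}}(V) = \Gamma^{\nu}(V)$ for the corresponding $\nu \in \Lambda(mn,d)$, yielding
\[
(\Gamma^{\mu})^{\lambda} \;\simeq\; \bigoplus_{\nu \in S} \Gamma^{\nu},
\]
as claimed.

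There is no real obstacle; the only care needed is bookkeeping—keeping track of which tensor factor is acted on by which copy of $GL$, and verifying that the decomposition given by the exponential property is genuinely the $GL(k^{m})$-weight space decomposition. This is routine once one fixes a basis $e_{1},\ldots,e_{m}$ of $k^{m}$ and chases through the functoriality of $\Gamma$.
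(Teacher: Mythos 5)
Your proof is correct and takes essentially the same route as the paper: reduce to $(\Gamma^{\mu})^{\lambda}$ via Lemma~\ref{lm:paramet} and then identify the $\lambda$-weight space using the exponential property of $\Gamma$, with the matrices of row sums $\mu$ and column sums $\lambda$ emerging from the weight bookkeeping. The paper organizes that bookkeeping slightly differently---as the $(\mu,\lambda)$-bi-weight space of $\Gamma^{d}(k^{n}\otimes k^{m}\otimes U)$ for $GL(k^{n})\times GL(k^{m})$, pulled back from the $GL(k^{n}\otimes k^{m})$-weight decomposition---but the substance is identical.
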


\begin{proof}
We have $\Gamma^{\mu} \intP \Gamma^{\lambda} \simeq (\Gamma^{\mu})^{\lambda}$. 
To unwind this, let $U\in \Gamma^{d}P_{k}$. 
Then using definition~\ref{defn:param} we get
$$
(\Gamma^{\mu} \intP \Gamma^{\lambda})(U)
\quad \simeq \quad  \Gamma^{\mu}({k^{m}}\otimes U)_{\lambda}
\quad \simeq \quad  (\Gamma^{d}({k^{n}}\otimes {k^{m}}\otimes U)_{\mu})_{\lambda},
$$
where the $\lambda$-weight space is taken with respect to $GL({k^{m}})$ and 
the $\mu$-weight space is taken with respect to $GL({k^{n}})$.
Thus, $(\Gamma^{\mu}\intP\Gamma^{\lambda})(U)$ is the $(\mu, \lambda)$-weight space of 
$\Gamma^{d}(k^{n}\otimes k^{m}\otimes U)$ with respect to the action of $GL({k^{n}})\times GL({k^{m}}).$

\medskip

On the other hand $\Gamma^d(k^{n}\otimes k^{m} \otimes U)$ is a polynomial representation of $GL({k^{n}}\otimes {k^{m}})$ 
whose weights are given by $n\times m$ non-negative integer matrices $(\nu_{ij})$ with the entries adding up to $d$.
We can pull back the action of $GL({k^{n}}\otimes {k^{m}})$ to $GL({k^{n}})\times GL({k^{m}})$ 
via the morphism $A \times B \mapsto A \otimes B$. 
This induces another weight space decompositiom of $\Gamma^d(k^{n}\otimes k^{m} \otimes U)$, 
in which elements of the $(\nu_{ij})$-weight space under the $GL({k^{n}}\otimes {k^{m}})$-action now have weight 
$(\mu_{1},\ldots,\mu_{n},\lambda_{1},\ldots,\lambda_{m})$ for the $GL({k^{n}})\times GL({k^{m}})$-action, 
where $\mu_{i}= \sum_{j=1}^{m}\nu_{ij},$ and $\beta_{j}=$ $\sum_{i=1}^{n}\nu_{ij}.$
\end{proof}

This leads to the calculation of all internal tensor products of the form $X^\lambda \intP Y^\mu$, 
where $X$ and $Y$ are classical exponential functors $\Gamma^d, \wedge^d$ or $\sym^d$.  
For this we need the three basic calculations $\wedge^d \intP \wedge^d$, $\wedge^d \intP \sym^d$ and $\sym^d \intP \sym^d$. 
(We already know that $\Gamma^d \intP X \simeq X$ for arbitrary $X$.) 
We first separate out a useful remark.

\begin{remark} 
Calculating the effect of $X \intP -$ on a morphism between representable functors.
A morphism $\sigma: \Gamma^{d,V} \to \Gamma^{d,W}$ corresponds via the Yoneda embedding 
to $\phi \in \hom_{\Gamma^d P_k}(W,V) = \Gamma^d \hom(W,V)$. 
We have 
$X \intP \Gamma^{d,V} \simeq X^V$ and $X \intP \Gamma^{d,W} \simeq X^W$.
Then the map $(X \intP \, \sigma) (U): X^V(U) \to X^W(U)$ is the map $X(\phi^* \otimes id_U^{\otimes d})$. 
One may first check this for representable $X$ and then deduce it for general $X$ by compatibility of $\intP$ with colimits.
\end{remark}

The first calculation in the following result is ~\cite[Proposition 3.6]{Krause13}. 
For the rest we follow the method used for parallel calculations of $\Bbb H$ by Touz\'e in~\cite[Lemma 4.6]{Touze14} along with the preceding remark. 

\begin{proposition}\label{prop:wedgesym}
We have $\wedge^{d}\intP\wedge^{d}\simeq\sym^{d}$ and $\sym^{d}\intP\sym^{d}\simeq\sym^{d}$. 
If 2 is invertible in $k$ then $\sym^{d}\intP\wedge^{d} \simeq \wedge^{d}$. 
If $2 = 0$ in $k$ then $\sym^{d}\intP\wedge^{d} \simeq \sym^{d}$. 
\end{proposition}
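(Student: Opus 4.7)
The plan is to mimic Touz\'e's approach to $\mathbb{H}$ on classical exponential functors~\cite[Lemma 4.6]{Touze14}: first exhibit concrete two-step presentations of $\wedge^d$ and $\sym^d$ by direct sums of $\Gamma^{\lambda}$'s, then apply $\intP$ in one slot, and finally identify the resulting cokernels using Lemma~\ref{lm:paramet} and the Remark preceding the proposition. Over any commutative ring $k$ the following presentations are available:
$$\bigoplus_{i=1}^{d-1} \Gamma^{(1^{i-1},2,1^{d-i-1})} \;\longrightarrow\; \Gamma^{1^d} \;\longrightarrow\; \wedge^d \;\longrightarrow\; 0,$$
where each component map is induced by the inclusion $\Gamma^2 \hookrightarrow \Gamma^1 \otimes \Gamma^1$ of symmetric tensors in positions $i, i+1$; and
$$\bigoplus_{i=1}^{d-1} \Gamma^{1^d} \;\longrightarrow\; \Gamma^{1^d} \;\longrightarrow\; \sym^d \;\longrightarrow\; 0,$$
where the restriction to the $i$-th summand is $1 - \tau_i$, with $\tau_i$ denoting the natural transformation of $\otimes^d = \Gamma^{1^d}$ swapping the $i$-th and $(i+1)$-th tensor factors. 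The first comes from the elementary identity $V^{\otimes 2}/\mathrm{Im}(\Gamma^2(V) \hookrightarrow V^{\otimes 2}) \simeq \wedge^2(V)$ applied in each adjacent pair; the second is the standard presentation of $\sym^d(V)$ as $V^{\otimes d}$ modulo adjacent transposition relations.

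Since $X \intP -$ is right exact and Lemma~\ref{lm:paramet} gives $\wedge^d \intP \Gamma^\lambda \simeq \wedge^\lambda$ and $\sym^d \intP \Gamma^\lambda \simeq \sym^\lambda$, applying these to the presentations produces right exact sequences involving only $\wedge^\mu$'s and $\sym^\mu$'s. For $\sym^d \intP \sym^d$, apply $\sym^d \intP -$ to the second presentation: the induced sequence is $\bigoplus_i \otimes^d \to \otimes^d \to \sym^d \intP \sym^d \to 0$ with the $i$-th component equal to $1 - \tau_i^{\mathrm{std}}$ (the ordinary swap of tensor factors, with no extra sign, since the $\sym$-exponential isomorphism is ordinarily commutative), so the cokernel is $\sym^d$. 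For $\wedge^d \intP \wedge^d$, apply $\wedge^d \intP -$ to the first presentation: the induced sequence is $\bigoplus_i U^{\otimes(i-1)} \otimes \wedge^2(U) \otimes U^{\otimes(d-i-1)} \to U^{\otimes d}$ (at an object $U$), with each component the natural antisymmetrization $u \wedge v \mapsto u\otimes v - v\otimes u$ in positions $i, i+1$; modding out identifies $u \otimes v$ with $v \otimes u$ in each adjacent pair, so the cokernel is $\sym^d$.

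For $\sym^d \intP \wedge^d$, I use the symmetry of $\intP$ and apply $\wedge^d \intP -$ to the presentation of $\sym^d$. The crucial point is that the exponential isomorphism $\wedge^d(V_1 \oplus \cdots \oplus V_d)_{(1^d)} \simeq V_1 \otimes \cdots \otimes V_d$ is graded-commutative, so swapping $V_i$ and $V_{i+1}$ introduces an extra factor of $-1$; hence $\wedge^d \intP \tau_i$ acts on $\wedge^{1^d} = \otimes^d$ as $-\tau_i^{\mathrm{std}}$. The induced sequence becomes $\bigoplus_i \otimes^d \to \otimes^d \to \sym^d \intP \wedge^d \to 0$ with the $i$-th component $1 + \tau_i^{\mathrm{std}}$. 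When $2$ is a unit in $k$ the cokernel is the antisymmetric part $\wedge^d$, while when $2 = 0$ in $k$ we have $1 + \tau_i^{\mathrm{std}} = 1 - \tau_i^{\mathrm{std}}$ and the cokernel is $\sym^d$.

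The main obstacle is justifying that $\wedge^d \intP -$, applied to the structural maps between $\Gamma^\lambda$'s, does indeed produce the claimed antisymmetrization embeddings and the signed swap action. This amounts to unpacking carefully the explicit formula for $X \intP \sigma$ from the Remark preceding the proposition together with the graded-commutative exponential structure of $\wedge$; the signs thereby introduced are precisely what forces the characteristic-dependent dichotomy for $\sym^d \intP \wedge^d$.
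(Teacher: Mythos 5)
Your proposal is correct, and for the mixed product it is essentially the paper's argument: present $\sym^d$ as the cokernel of $\oplus_i(1-\sigma_i)$ on $\otimes^d$, apply $\wedge^d\intP-$, observe that the graded-commutativity of the exponential isomorphism for $\wedge$ turns $\sigma_i$ into $-\sigma_i$, and read off $\wedge^d$ (2 a unit) or $\sym^d$ ($2=0$) from the cokernel of $\oplus_i(1+\sigma_i)$. Where you genuinely diverge is in the other two computations. The paper simply cites $\wedge^d\intP\wedge^d\simeq\sym^d$ from~\cite[Proposition 3.6]{Krause13} and then obtains $\sym^d\intP\sym^d$ by applying $\wedge^d\intP-$ a second time, so that the sign on $\sigma_i$ flips back; you instead make the argument self-contained by presenting $\wedge^d$ as the cokernel of the adjacent inclusions $\Gamma^2\hookrightarrow\otimes^2$ and computing $\sym^d\intP\sym^d$ directly from the $\sym$-presentation (no sign appears since $\sym$ is honestly commutative). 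Your route costs one extra map identification beyond the paper's: that $\wedge^d\intP-$ sends the structural inclusion $\Gamma^{(1^{i-1},2,1^{d-i-1})}\to\Gamma^{1^d}$ to the antisymmetrization $u\wedge v\mapsto u\otimes v-v\otimes u$ in slots $i,i+1$. You flag this (together with the sign of $\wedge^d\intP\tau_i$) as the remaining obstacle rather than carrying it out, but it is the same kind of verification the paper itself performs for $\tau_i$, and it does go through: the inclusion corresponds under Yoneda to the element $\pi_i\otimes\pi_{i+1}+\pi_{i+1}\otimes\pi_i$ (in the relevant slots) of $\Gamma^d\hom(k^d,k^{d-1})$, and applying $\wedge^d(\phi^*\otimes \mathrm{id})$ and projecting to the appropriate weight space produces exactly the two terms $u\otimes v$ and $-v\otimes u$, the sign coming from reordering wedge factors. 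Also note your cokernel identifications are the standard presentations $\sym(U)=T(U)/\langle u\otimes v-v\otimes u\rangle$ and, when $2$ is invertible, $\mathrm{span}\{x+\sigma_ix\}=\Gamma^2$ in adjacent slots, so the dichotomy comes out as claimed. In short: same mechanism, but your version trades the external citation for a slightly longer, fully explicit computation, while the paper keeps every map calculation confined to the single endomorphism $\tau_i$.
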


\begin{proof} 
(We can use the formula $\mathcal L(\Sch X) \simeq X \intP \sym^d$ from Remark~\ref{remark:adjointinternal} 
with $X = \sym^d$ and $X = \wedge^d$, but we prefer to give a direct argument.) 
In $\RepG$ we have the presentation
$$ \underset {i = 0} {\overset {d-1} \oplus} \ \otimes^d  \;  \xrightarrow {\oplus (1 - \sigma_i)} \; \otimes^d \longrightarrow \sym^d \longrightarrow 0,$$
where $\sigma_i$ switches the $i$ and $i+1$ tensor factors. 
To apply $\wedge^d \ \intP \ - $ we realize $\otimes^d$ as a summand of $\Gamma^{d,k^d}$ via the exponential property. 
Then $\sigma_i$ can be realized as the restriction of the morphism $\tau_i$ in $\eendo_{\RepG}(\Gamma^{d,k^d})$ that 
corresponds to $f_i^{\otimes d} \in \Gamma^d \eendo (k^d)$, 
where $f_i \in \eendo (k^d)$ switches the standard basis vectors $e_i$ and $e_{i+1}$ in $k^d$.
Now $\wedge^d \ \intP \ \otimes^d \simeq \otimes^d$ by Lemma~\ref{lm:paramet} and the exponential property of $\wedge^d$.
The map $\wedge^d \ \intP \ \tau_i (U) \in \eendo \wedge^{d,k^d}(U)$ switches the occurrences of $e^*_i$ and $e^*_{i+1}$
in $d$-fold wedge products of vectors $e^*_j \otimes u \in (k^{d})^* \otimes U$. 
Identifying $U^{\otimes d}$ inside $\wedge^d((k^d)^* \otimes U)$ gives $\wedge^d \ \intP \ \sigma_i = - \sigma_i$.
The claims about $\wedge^d \intP \sym^d$ follow from the resulting presentation
$$ \underset {i = 0} {\overset {d-1} \oplus} \ \otimes^d  \;  \xrightarrow {\oplus (1 + \sigma_i)} \; 
\otimes^d \longrightarrow \wedge^d \intP \sym^d \longrightarrow 0.$$
Applying $\wedge^d \ \intP \ - $ again gives a presentation of $\sym^d \intP \sym^d$, as $\wedge^{d}\intP\wedge^{d}\simeq\sym^{d}$. 
But this is also the original presentation as the sign of $\sigma_i$ switches once again.  
\end{proof}

\begin{remark}\label{2problem} 
If 2 is a nonzero nonunit, then $\wedge^d \intP \sym^d$ can be more complicated, as can be seen when $k= \Bbb Z$ and $d=2$. 
In this case we have the exact sequence $0 \to I^{(1)} \to \wedge^2 \intP \sym^2 \to \wedge^2 \to 0,$ 
where $I^{(1)}$ is the 2-torsion functor defined by 
$I^{(1)}(V) = \Bbb Z$-span of $\{ v \otimes v \} / \Bbb Z$-span of $\{ v_1 \otimes v_2 +  v_2 \otimes v_1 \} $. 
The functor $I^{(1)}$ can be identified with the Frobenius twist of the identity functor $I$ over $k = \Bbb Z / 2 \Bbb Z$. 
In contrast, over any hereditary $k$, if functors $X$ and $Y$ take values in $P_k$, then so does $\Bbb H(X,Y)$. 
\end{remark}

From Propositions~\ref{prop:lambdamu} and~\ref{prop:wedgesym} we have the following calculations, 
where all direct sums are over the same set $S$ as in Proposition~\ref{prop:lambdamu}.

\begin{corollary}\label{cor:symandwedge}

Let $\lambda\in\Lambda(m,d)$ and $\mu\in\Lambda(n,d)$. Then we have 
$$\Gamma^{\lambda}\intP\wedge^{\mu}\simeq\displaystyle\oplus\wedge^{\nu}  \quad , \quad
\Gamma^{\lambda}\intP \sym^{\mu}\simeq \wedge^{\lambda}\intP\wedge^{\mu}\simeq  
\sym^{\lambda}\intP\sym^{\mu}\simeq \displaystyle\oplus \sym^{\nu},$$
$$\sym^{\lambda}\intP\wedge^{\mu}\simeq \displaystyle\oplus\wedge^{\nu} \text{ if 2 is a unit and }  
\sym^{\lambda}\intP\wedge^{\mu} \simeq \displaystyle\oplus\sym^{\nu} \text{ if } 2=0.$$

\end{corollary}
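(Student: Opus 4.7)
The plan is to bootstrap from the two already established ingredients: the ``parametrization'' formula $X \intP \Gamma^{\lambda} \simeq X^{\lambda}$ of Lemma~\ref{lm:paramet} (which tells us what $\Gamma^{\lambda} \intP -$ does), Proposition~\ref{prop:lambdamu} for $\Gamma^{\lambda}\intP \Gamma^{\mu}$, and the three ``base'' identities of Proposition~\ref{prop:wedgesym} for $A^{d}\intP B^{d}$ with $A,B\in\{\wedge,\sym\}$ (together with the trivial $\Gamma^{d}\intP Z\simeq Z$). The key is a small preliminary observation: for $A\in\{\Gamma,\wedge,\sym\}$ and $\mu\in\Lambda(n,d)$,
$$A^{\mu}\;\simeq\;\Gamma^{\mu}\intP A^{d}.$$
Indeed, by Lemma~\ref{lm:paramet} (and symmetry of $\intP$) the right side equals $(A^{d})^{\mu}(V)=A^{d}(k^{n}\otimes V)_{\mu}$, and the exponential property $A(U\oplus W)\simeq A(U)\otimes A(W)$ decomposes $A^{d}(k^{n}\otimes V)$ into a direct sum indexed by sequences $(\alpha_{1},\dots,\alpha_{n})$ with $\sum\alpha_{i}=d$, whose $GL(k^{n})$-weight is exactly $\alpha$; picking $\alpha=\mu$ gives $A^{\mu_{1}}\otimes\cdots\otimes A^{\mu_{n}}=A^{\mu}$.

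Next, I would handle the five identities involving $\Gamma^{\lambda}$ uniformly. For $A\in\{\wedge,\sym\}$, associativity and commutativity of $\intP$ together with the preliminary observation give
$$\Gamma^{\lambda}\intP A^{\mu} \;\simeq\; \Gamma^{\lambda}\intP(\Gamma^{\mu}\intP A^{d}) \;\simeq\; (\Gamma^{\lambda}\intP \Gamma^{\mu})\intP A^{d} \;\simeq\; \Big(\bigoplus_{\nu\in S}\Gamma^{\nu}\Big)\intP A^{d} \;\simeq\; \bigoplus_{\nu\in S}A^{\nu},$$
where the third isomorphism uses Proposition~\ref{prop:lambdamu} and the last reuses the preliminary observation in the other direction. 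This gives $\Gamma^{\lambda}\intP\wedge^{\mu}\simeq\bigoplus \wedge^{\nu}$ and $\Gamma^{\lambda}\intP\sym^{\mu}\simeq\bigoplus \sym^{\nu}$.

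Finally, for $B^{\lambda}\intP A^{\mu}$ with $A,B\in\{\wedge,\sym\}$, I would apply the preliminary observation to both factors and regroup:
$$B^{\lambda}\intP A^{\mu} \;\simeq\; (\Gamma^{\lambda}\intP B^{d})\intP(\Gamma^{\mu}\intP A^{d}) \;\simeq\; (\Gamma^{\lambda}\intP \Gamma^{\mu})\intP(B^{d}\intP A^{d}) \;\simeq\; \bigoplus_{\nu\in S}\Gamma^{\nu}\intP(B^{d}\intP A^{d}).$$
The inner factor $B^{d}\intP A^{d}$ is computed by Proposition~\ref{prop:wedgesym}: it equals $\sym^{d}$ when $(B,A)\in\{(\wedge,\wedge),(\sym,\sym)\}$, it equals $\wedge^{d}$ when $(B,A)=(\sym,\wedge)$ and $2$ is a unit, and it equals $\sym^{d}$ when $(B,A)=(\sym,\wedge)$ and $2=0$. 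In each case $\Gamma^{\nu}\intP(B^{d}\intP A^{d})$ collapses by the preliminary observation to $(B^{d}\intP A^{d})^{\nu}$, which is $\sym^{\nu}$ or $\wedge^{\nu}$ accordingly, yielding the four remaining formulas.

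There is no real obstacle here since everything is a formal consequence of associativity/commutativity of $\intP$ and three already proved inputs; the only care needed is to check that the preliminary observation works in both directions (collapsing $\Gamma^{\mu}\intP A^{d}$ to $A^{\mu}$ and re-expanding $\Gamma^{\nu}\intP(B^{d}\intP A^{d})$ to the appropriate exponential-parameter functor), which is immediate from Lemma~\ref{lm:paramet} and the exponential property.
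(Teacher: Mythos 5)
Your proposal is correct and is essentially the paper's intended argument: the paper states the corollary as an immediate consequence of Propositions~\ref{prop:lambdamu} and~\ref{prop:wedgesym}, and your derivation just makes the implicit bookkeeping explicit via Lemma~\ref{lm:paramet} (giving $A^{\mu}\simeq\Gamma^{\mu}\intP A^{d}$, cf.\ the remark after Definition~\ref{defn:param}) together with associativity, symmetry, and colimit-preservation of $\intP$. No gaps.
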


Applying $\Sch$ to the above along with Theorem~\ref{thm:main} gives analogous descriptions of Kronecker products 
involving permutation modules $M^{\lambda}$ and signed permutation modules. 
These are well known when $k$ is a field  of characteristic 0 in the context of symmetric functions~\cite[Chapter I, 7.23e]{MacD}.
For example, we have

\begin{corollary}
Let $\mu \in \Lambda(n,d)$ and let $\lambda \in \Lambda(m,d)$. 
Then, $M^{\mu}\otimes M^{\lambda} \simeq \oplus_{\nu \in S} M^{\nu}$ in $kS_d$\rm{-Mod}. 
\end{corollary}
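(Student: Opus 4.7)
The plan is to simply apply the Schur functor to the isomorphism $\Gamma^\mu \intP \Gamma^\lambda \simeq \bigoplus_{\nu \in S} \Gamma^\nu$ of Proposition~\ref{prop:lambdamu}, and then use that $\Sch$ is compatible with the two tensor structures (Theorem~\ref{thm:main}) together with the fact that $\Sch(\Gamma^\lambda) \simeq M^\lambda$.

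First I would verify the key identification $\Sch(\Gamma^\lambda) \simeq M^\lambda$, the usual permutation module of $kS_d$ on the cosets of the Young subgroup $S_\lambda$. Specializing item~\ref{item:wtspace} of Section~\ref{subsec:yoneda}, one has $\Sch(\Gamma^\lambda) \simeq \hom_{\RepG}(\Gamma^\lambda, \otimes^d)^*$-style weight-space description: more concretely, by Proposition~\ref{prop:main} one has $\Sch(\Gamma^{d,k^n}) \simeq (k^n)^{\otimes d}$ with the $S_d$-action permuting tensor factors, and the decomposition $\Gamma^{d,k^n} \simeq \bigoplus_{\lambda \in \Lambda(n,d)} \Gamma^\lambda$ is compatible with the diagonal $GL(k^n)$-action whose $\lambda$-weight space on $(k^n)^{\otimes d}$ is precisely $M^\lambda$. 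Thus $\Sch(\Gamma^\lambda) \simeq M^\lambda$ as $kS_d$-modules.

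With this identification in hand, the rest is immediate. Applying $\Sch$ to both sides of Proposition~\ref{prop:lambdamu}, and using Theorem~\ref{thm:main} on the left along with the fact that $\Sch$ preserves direct sums on the right, we obtain
\begin{equation*}
M^\mu \otimes M^\lambda \simeq \Sch(\Gamma^\mu) \otimes \Sch(\Gamma^\lambda) \simeq \Sch(\Gamma^\mu \intP \Gamma^\lambda) \simeq \Sch\Bigl(\bigoplus_{\nu \in S} \Gamma^\nu\Bigr) \simeq \bigoplus_{\nu \in S} M^\nu.
\end{equation*}

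The only real step requiring care is the identification $\Sch(\Gamma^\lambda) \simeq M^\lambda$; everything else is formal. This identification is classical but deserves explicit mention because it is the bridge that turns the functor-category calculation of Proposition~\ref{prop:lambdamu} into a statement about the classical Mackey-style decomposition of a Kronecker product of permutation modules.
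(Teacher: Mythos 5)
Your proof is correct and follows essentially the same route as the paper: apply $\Sch$ to Proposition~\ref{prop:lambdamu} and use Theorem~\ref{thm:main} together with the identification $\Sch(\Gamma^{\lambda}) \simeq M^{\lambda}$. The only difference is that you spell out this last identification (via Proposition~\ref{prop:main} and the weight-space decomposition of $\Sch(\Gamma^{d,k^n})$), which the paper leaves implicit; that added detail is accurate.
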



\subsection{Internal tensor product and Weyl functors}\label{sec:intweyl}

We show that $\Delta(\lambda) \intP \Gamma^\nu$ has an explicit Weyl filtration. 
We obtain parallel results for dual Weyl functors using Koszul duality and for (dual) Specht modules using the Schur functor.
By contrast, the internal tensor product of two Weyl functors need not have a Weyl filtration 
e.g., $\wedge^d \intP \wedge^d \simeq \sym^d$ by Proposition~\ref{prop:wedgesym}. 
But we show that their higher derived internal tensor products do vanish, which is not true for two dual Weyl functors.

\begin{proposition}\label{prop:weyl}
$\Delta(\lambda)\intP \Gamma^{\nu}$ has a Weyl filtration that is independent of the ground ring $k$ and in which 
the multiplicity of any Weyl functor can be calculated as a sum of products of Littlewood-Richardson coefficients.
\end{proposition}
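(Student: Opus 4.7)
The plan is to reduce to a weight-space calculation via the parametrization lemma, iterate the classical ABW Weyl filtration of a Weyl module on a direct sum, and finally refine using the known existence of Weyl filtrations on tensor products of Weyl functors.

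First, by Lemma~\ref{lm:paramet} we have $\Delta(\lambda) \intP \Gamma^{\nu} \simeq \Delta(\lambda)^{\nu}$ for $\nu \in \Lambda(n,d)$, where $\Delta(\lambda)^{\nu}(V)$ is the $\nu$-weight space (under the diagonal $GL(k^n)$-action) of $\Delta(\lambda)(k^n \otimes V)$. Decomposing $k^n \otimes V = V_1 \oplus \cdots \oplus V_n$, with each $V_i \simeq V$ carrying $GL(k^n)$-weight $\epsilon_i$, the problem becomes to understand the $\nu$-weight part of $\Delta(\lambda)(V_1 \oplus \cdots \oplus V_n)$ functorially in $V$.

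Second, iterate the classical ABW Weyl filtration of $\Delta(\lambda)(U \oplus W)$, which is defined over $\Z$ and has factors $\Delta(\alpha)(U) \otimes \Delta(\beta)(W)$ with multiplicities equal to the Littlewood--Richardson coefficient $c^{\lambda}_{\alpha\beta}$. This yields a filtration of $\Delta(\lambda)(V_1 \oplus \cdots \oplus V_n)$ whose associated graded is the direct sum over tuples $(\lambda^{(1)},\ldots,\lambda^{(n)})$ of tensor products $\Delta(\lambda^{(1)})(V_1) \otimes \cdots \otimes \Delta(\lambda^{(n)})(V_n)$, each appearing with multiplicity the iterated LR coefficient $c^{\lambda}_{\lambda^{(1)},\ldots,\lambda^{(n)}}$ (a sum of products of ordinary LR coefficients by associativity). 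Because $\Delta(\lambda^{(i)})(V_i)$ sits in pure $GL(k^n)$-weight $|\lambda^{(i)}|\epsilon_i$ and extracting weight spaces is an exact operation, the induced filtration on $\Delta(\lambda)^{\nu}(V)$ retains exactly those tuples with $|\lambda^{(i)}| = \nu_i$, after the canonical identification of each $V_i$ with $V$.

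Third, each tensor product $\Delta(\lambda^{(1)}) \otimes \cdots \otimes \Delta(\lambda^{(n)})$ of Weyl functors itself admits a Weyl filtration over an arbitrary commutative $k$, with the multiplicity of $\Delta(\mu)$ given by $c^{\mu}_{\lambda^{(1)},\ldots,\lambda^{(n)}}$; this is the Wang--Donkin--Mathieu theorem, and can equivalently be extracted in our functorial setting from Proposition~\ref{prop:vanishing} together with the standard $\mathrm{Ext}^1$-vanishing criterion for good/Weyl filtrations. Refining the filtration produced in the second step factor-by-factor delivers a Weyl filtration of $\Delta(\lambda) \intP \Gamma^{\nu}$ defined uniformly over $\Z$ and stable under base change, in which the multiplicity of $\Delta(\mu)$ is
$$\sum_{\substack{(\lambda^{(1)},\ldots,\lambda^{(n)}) \\ |\lambda^{(i)}|=\nu_i}} c^{\lambda}_{\lambda^{(1)},\ldots,\lambda^{(n)}} \; c^{\mu}_{\lambda^{(1)},\ldots,\lambda^{(n)}},$$
which is manifestly a sum of products of Littlewood--Richardson coefficients. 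The main obstacle is the third step: securing a Weyl filtration of the tensor product of Weyl functors valid uniformly over any $k$ (and with LR multiplicities independent of $k$). Over a field this is Wang's theorem, but the integral, functorial version — which is what actually gives independence of the ground ring — is the deepest external input in the argument.
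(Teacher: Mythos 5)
Your argument is essentially the paper's proof: reduce via Lemma~\ref{lm:paramet} to the $\nu$-weight space of $\Delta(\lambda)(V\oplus\cdots\oplus V)$, iterate the universal Akin--Buchsbaum filtration on direct sums, extract the weight condition $|\lambda^{(i)}|=\nu_i$, and refine the resulting factors into Weyl filtrations with Littlewood--Richardson multiplicities, yielding the same multiplicity formula. The ``deepest external input'' you flag is not actually a gap: the characteristic-free, functorial Weyl filtration of skew Weyl functors (hence of tensor products of Weyl functors) with LR multiplicities is precisely \cite[Theorem 2.6]{Kouwenhoven}, or the contravariant dual of \cite[Theorem 1.3]{Boffi}, which is what the paper cites --- the only cosmetic difference being that the paper keeps the skew factors $\Delta(\alpha^i/\alpha^{i-1})$ coming from the Akin--Buchsbaum filtration (whose subquotients are $\Delta(\alpha)(U)\otimes\Delta(\lambda/\alpha)(W)$, not yet straight Weyl factors with $c^{\lambda}_{\alpha\beta}$ multiplicities) until the final refinement step, rather than folding that refinement into the induction as you do.
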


\begin{proof}
More generally, we will give a precise description of $\Delta(\lambda/\mu)\intP\Gamma^{\nu},$ 
where $\Delta(\lambda/\mu)$ is the skew Weyl functor~\cite{ABW} corresponding to
partitions $\lambda$ and $\mu$ with $\mu\subset \lambda$, i.e., the Young diagram of $\mu$ is contained in that of $\lambda$.
Let $\nu = (\nu_1, \ldots, \nu_n) \in \Lambda(n,d)$. By Lemma~\ref{lm:paramet}, 
\begin{equation}\label{weyl1}
(\Delta(\lambda/\mu)\intP\Gamma^{\nu})(V) \simeq
\Delta(\lambda/\mu)^{\nu}(V) \simeq \Delta(\lambda/\mu)(\hom
(k^n, V))_{\nu} \simeq \Delta(\lambda/\mu)(V\oplus V\oplus \cdots \oplus
V)_{\nu}. \end{equation}

Let $U$ and $W$ be free $k$-modules of finite rank. 
In~\cite{AB85}, Akin and Buchsbaum give an explicit construction of a filtration of the skew Weyl module 
$\Delta(\lambda/\mu)(U \oplus W)$ that is universal (i.e., independent of the ground ring $k$), 
functorial in $U$ and $W$ and whose associated graded object is  
\begin{equation}\label{weyl2}
\bigoplus_{\underset{\mu \subset \alpha \subset \lambda}
{\text {partitions } \alpha}} \Delta(\alpha/\mu)(U) \otimes \Delta(\lambda/\alpha)(W).
\end{equation} 
We use~(\ref{weyl2}) to first calculate  $\Delta(\lambda/\mu) \intP \Gamma^{d,k^n} \simeq \Delta(\lambda/\mu)^{k^{n}}$. 
Taking $V$ to be free of finite rank and using~(\ref{weyl2}) repeatedly 
gives a filtration of $\Delta(\lambda/\mu) \intP \Gamma^{d,k^n} (V) \simeq \Delta(\lambda/\mu)(V\oplus V\oplus \cdots \oplus V)$. 
As this description is true for any free module $V$ of finite rank, 
the same description is valid in $\RepG$ by the equivalence~\ref{thm:FS}. 
Altogether, we get a filtration of $\Delta(\lambda/\mu) \intP \Gamma^{d,k^n}$ whose associated graded object is
\begin{equation}\label{eqn:weyl3} 
\bigoplus_{\underset{\mu \subset \alpha^1 \subset \alpha^2 \subset \cdots \subset \alpha^{n-1} \subset
\lambda}{\text {partitions } \alpha^1, \alpha^2, \ldots, \alpha^{n-1}}} 
\Delta(\alpha^1/\mu) \otimes \Delta(\alpha^2/\alpha^1) \otimes \cdots \otimes\Delta(\lambda/\alpha^{n-1}). 
\end{equation}
Taking the $\nu$-weight space in~(\ref{weyl1}) is equivalent to requiring $|\alpha^i/\alpha^{i-1}| = \nu_i$ for all $i = 1, \ldots, n$
in~(\ref{eqn:weyl3}), with the understanding that $\alpha^0 = \mu$ and $\alpha^n = \lambda$.
Therefore we get a filtration of $\Delta(\lambda/\mu)\intP\Gamma^{\nu}$ whose associated graded object is
\begin{equation}\label{weyl4}
\bigoplus_{\underset{\alpha^0 = \mu, \alpha^n = \lambda, |\alpha^i/\alpha^{i-1}| = \nu_i}{\text {partitions } 
\alpha^0 \subset \alpha^1 \subset \alpha^2 \subset \cdots \subset \alpha^{n-1} \subset \alpha^n}} 
\Delta(\alpha^1/\mu) \otimes \Delta(\alpha^2/\alpha^1) \otimes \cdots \otimes\Delta(\lambda/\alpha^{n-1}).
\end{equation}
Finally note that each tensor product in~(\ref{weyl4}) itself has a Weyl filtration in which the multiplicity of 
any $\Delta(\beta)$ can be calculated as a sum of products of Littlewood-Richardson coefficients. 
This is because $\Delta(\alpha^i/\alpha^{i-1})$ has a Weyl filtration in which the multiplicity of $\Delta(\beta)$ 
equals the Littlewood-Richardson coefficient $c^{\alpha^i}_{\alpha^{i-1}\beta}$, 
by~\cite[Theorem 2.6]{Kouwenhoven} or contravariant dual of~\cite[Theorem 1.3]{Boffi}. 
Applying this in all tensor slots leads to a filtration whose successive quotients are $n$-fold tensor products of Weyl functors. 
These in turn have Weyl filtrations with multiplicities given by products of Littlewood-Richardson coefficients. 
\end{proof}

\begin{corollary}\label{cor:koz}
$\Delta(\lambda)\intP \wedge^{\nu} \simeq \nabla(\lambda^\prime)\intP \Gamma^{\nu}$ has a dual Weyl filtration 
that is independent of the ground ring $k$ and in which the multiplicity of any dual Weyl functor can be calculated 
as a sum of products of Littlewood-Richardson coefficients.
\end{corollary}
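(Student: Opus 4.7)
The plan is to reduce the corollary to Proposition~\ref{prop:weyl} via a Koszul-duality argument. First, combining Lemma~\ref{lm:paramet} with the exponential property of the exterior algebra, I get $\wedge^\nu \simeq (\wedge^d)^\nu \simeq \wedge^d \intP \Gamma^\nu$, since the $\nu$-weight space of $\wedge^d(k^n\otimes V)$ under the $GL(k^n)$-action is exactly $\wedge^{\nu_1}(V)\otimes\cdots\otimes\wedge^{\nu_n}(V)$. Symmetry and associativity of the internal tensor product then yield
$$\Delta(\lambda)\intP\wedge^\nu \;\simeq\; \Delta(\lambda)\intP(\wedge^d\intP\Gamma^\nu) \;\simeq\; (\wedge^d\intP\Delta(\lambda))\intP\Gamma^\nu,$$
so the asserted isomorphism reduces to the single Koszul-duality identity $\wedge^d\intP\Delta(\lambda) \simeq \nabla(\lambda')$.

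To establish this identity I would invoke the Ringel/Koszul duality picture of Cha\l{}upnik~\cite{Chalupnik08}, Touz\'e~\cite{Touze13}, and Krause~\cite{Krause13}: the derived functor $\wedge^d\overset{\bf L}\intP-$ implements an equivalence sending $\Delta(\lambda)$ to $\nabla(\lambda')$, and Weyl functors are moreover acyclic for $\wedge^d\intP-$ (all higher internal Tors with $\wedge^d$ vanish on Weyl functors). Acyclicity is what lets the derived statement specialize to a non-derived isomorphism. Sanity checks match expectations: for $\lambda=(d)$ one recovers the trivial $\Gamma^d\intP\wedge^d\simeq\wedge^d$ (since $\Gamma^d$ is a unit for $\intP$), and for $\lambda=(1^d)$ one recovers $\wedge^d\intP\wedge^d\simeq\sym^d$ from Proposition~\ref{prop:wedgesym}.

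With the isomorphism in hand, the dual Weyl filtration comes by applying the functor $\wedge^d\intP-$ to the explicit Weyl filtration of $\Delta(\lambda)\intP\Gamma^\nu$ built in Proposition~\ref{prop:weyl}. The acyclicity of each $\Delta(\mu)$ in the associated graded guarantees that the short exact sequences of the filtration remain exact under $\wedge^d\intP-$, while each $\Delta(\mu)$ is sent to $\nabla(\mu')$. Hence the multiplicity of $\nabla(\beta)$ in the resulting filtration of $(\wedge^d\intP\Delta(\lambda))\intP\Gamma^\nu\simeq\nabla(\lambda')\intP\Gamma^\nu$ equals the multiplicity of $\Delta(\beta')$ in the filtration of Proposition~\ref{prop:weyl}, and is therefore still a sum of products of Littlewood--Richardson coefficients (which can equivalently be expressed with transposed partitions via the classical symmetry $c^\alpha_{\beta\gamma}=c^{\alpha'}_{\beta'\gamma'}$).

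The hard part will be the Koszul-duality identity $\wedge^d\intP\Delta(\lambda)\simeq\nabla(\lambda')$ together with the underlying acyclicity of Weyl functors for $\wedge^d\intP-$, over an arbitrary commutative ring $k$. Both are available in spirit from the Krause/Cha\l{}upnik/Touz\'e machinery at the derived level, but isolating them cleanly and specializing them to the non-derived level, uniformly in $\lambda$ and $k$, is the step that requires the most care; a fallback would be to parallel the Akin--Buchsbaum argument using a universal skew dual Weyl filtration of $\nabla(\lambda'/\mu')(U\oplus W)$ in place of the skew Weyl filtration employed in Proposition~\ref{prop:weyl}.
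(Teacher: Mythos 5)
Your proposal is correct and follows essentially the same route as the paper: reduce via $\wedge^\nu\simeq\wedge^d\intP\Gamma^\nu$ (Lemma~\ref{lm:paramet}) and associativity, invoke the Koszul duality identity $\wedge^d\overset{\bf L}\intP\Delta(\lambda)\simeq\nabla(\lambda')$, and push the filtration of Proposition~\ref{prop:weyl} through $\wedge^d\intP-$ using acyclicity of Weyl functors. The step you flag as the hard part is exactly what the paper disposes of by citing~\cite[Proposition 4.16]{Krause13}, which packages both the duality isomorphism and the required acyclicity over an arbitrary commutative ring.
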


\begin{proof}
Apply $-\overset {\bf L} {\intP} \wedge^d$ to Proposition~\ref{prop:weyl}. 
By~\cite[Propositions 3.4, 4.16]{Krause13} we have, respectively, 
$\wedge^d \overset {\bf L}{\intP} \Gamma^\nu \simeq \wedge^\nu$ (or see Lemma~\ref{lm:paramet}) and
$\wedge^d \overset {\bf L}{\intP} \Delta(\lambda) \simeq \nabla(\lambda^\prime)$.
We calculate
\begin{center}
$\Delta(\lambda) \overset {\bf L}{\intP} \wedge^{\nu} \simeq  
\Delta(\lambda) \overset {\bf L}{\intP} \wedge^d \overset {\bf L}{\intP} \Gamma^{\nu} \simeq \nabla(\lambda^\prime) 
\overset {\bf L}{\intP} \Gamma^{\nu}$ 
\end{center}
and remark that applying $\wedge^d \intP -$ turns a Weyl filtration into a dual Weyl filtration, 
because a functor with a Weyl filtration is acyclic for the left exact functor $\wedge^d \intP -$.
\end{proof}

The property of having a (dual) Weyl filtration passes to direct summands. 
As any projective object in the $\RepG$ is a summand of a direct sum of $\Gamma^{d,k^n}$, we get from~\ref{prop:weyl} and~\ref{cor:koz} 

\begin{corollary}
The internal tensor product of a (dual) Weyl functor and a finitely generated projective object in $\XRepG^{d}_{k}$ has a (dual) Weyl filtration.
\end{corollary}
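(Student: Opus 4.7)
The plan is to bootstrap from Proposition~\ref{prop:weyl} and Corollary~\ref{cor:koz} (which give the result when the second factor is $\Gamma^\nu$ or $\wedge^\nu$) to a general finitely generated projective, using three ingredients: (i) a nice structural description of projectives in $\RepG$, (ii) additivity of $\intP$, and (iii) closure of the class of functors with a (dual) Weyl filtration under direct summands.

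First I would recall from item~\ref{item:wtspace} in Section~\ref{subsec:yoneda} that $\Gamma^{d,k^n} \simeq \bigoplus_{\lambda \in \Lambda(n,d)} \Gamma^{\lambda}$, so $\Delta(\mu) \intP \Gamma^{d,k^n} \simeq \bigoplus_{\lambda \in \Lambda(n,d)} \Delta(\mu) \intP \Gamma^{\lambda}$ by right exactness (and in fact coproduct-preservation) of $\intP$ as a left adjoint. Each summand on the right has a Weyl filtration by Proposition~\ref{prop:weyl}, and concatenating these finitely many filtrations in any order gives a Weyl filtration of $\Delta(\mu) \intP \Gamma^{d,k^n}$ itself. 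Now let $P$ be an arbitrary finitely generated projective in $\RepG$. By item~\ref{finite-criterion} in Section~\ref{subsec:yoneda}, $P$ is a quotient of a finite direct sum $(\Gamma^{d,k^d})^{\oplus N}$, and projectivity of $P$ makes this quotient map split, so $P$ is a direct summand of $(\Gamma^{d,k^d})^{\oplus N}$. Applying $\Delta(\mu) \intP -$, we conclude that $\Delta(\mu) \intP P$ is a direct summand of $\Delta(\mu) \intP (\Gamma^{d,k^d})^{\oplus N}$, which has a Weyl filtration by the previous step.

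The key fact I need in order to conclude is that the class of objects in $\RepG$ admitting a Weyl filtration is closed under direct summands. This is a classical theorem (the analogue for the coordinate Schur algebra $S_k(n,d)$ over a field is the celebrated Donkin--Mathieu result, and the general statement for $\RepG$ over an arbitrary commutative $k$ follows by the equivalence of Proposition~\ref{thm:FS} together with base change from $\mathbb{Z}$); the standard proof uses the $\mathrm{Ext}$-vanishing of Proposition~\ref{prop:vanishing} characterization: $X$ has a Weyl filtration iff $\mathrm{Ext}^1(X, \nabla(\nu)) = 0$ for all partitions $\nu$, a condition manifestly inherited by summands. This would be the one nontrivial ingredient to cite, and it is the main conceptual obstacle. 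Everything else is a direct consequence of the additive/colimit-preserving nature of $\intP$.

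For the dual case, the argument is entirely parallel: Corollary~\ref{cor:koz} gives that $\nabla(\mu) \intP \Gamma^{\nu}$ has a dual Weyl filtration, so the same summand-of-$\Gamma^{d,k^n}$ argument combined with the (analogously classical) fact that dual Weyl filtrations are closed under direct summands (characterized dually by $\mathrm{Ext}^1(\Delta(\nu), X) = 0$) yields the statement for $\nabla(\mu) \intP P$. Thus both halves of the corollary reduce to the already-proved cases plus a standard Ext-vanishing criterion.
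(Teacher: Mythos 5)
Your argument is correct and is essentially the paper's own: the paper likewise notes that any (finitely generated) projective is a direct summand of a finite direct sum of $\Gamma^{d,k^n}$, applies Proposition~\ref{prop:weyl} and Corollary~\ref{cor:koz} to the summands $\Gamma^{\lambda}$, and invokes the fact that having a (dual) Weyl filtration passes to direct summands. Your extra discussion of the $\mathrm{Ext}^1$-vanishing criterion just fills in the justification of that last fact, which the paper states without proof.
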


Applying the Schur functor to Proposition~\ref{prop:weyl} and Corollary~\ref{cor:koz} we obtain 
\begin{corollary}\label{cor:specht}
The Kronecker product of a (dual) Specht module with a permutation module has an explicitly constructed 
(dual) Specht filtration that is independent of the ground ring $k$. 
\end{corollary}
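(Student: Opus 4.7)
The plan is to obtain this corollary essentially as a direct consequence of Proposition~\ref{prop:weyl} and Corollary~\ref{cor:koz} by pushing both statements through the Schur functor. Because $\Sch$ is exact, it carries any filtration of a strict polynomial functor to a filtration of its image by symmetric group modules, and the successive quotients on the $\RepG$ side are sent to the corresponding successive quotients on the $kS_d$-Mod side. So the proof splits into (a) identifying the images of the building blocks under $\Sch$, and (b) invoking Theorem~\ref{thm:main} to rewrite the total object as a Kronecker product.

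For the Specht case I would start with Proposition~\ref{prop:weyl}, which gives $\Delta(\lambda)\intP\Gamma^{\nu}$ a Weyl filtration whose successive quotients are Weyl functors $\Delta(\beta)$ appearing with multiplicities expressed as sums of products of Littlewood--Richardson coefficients. Applying $\Sch$ termwise yields a filtration of $\Sch(\Delta(\lambda)\intP\Gamma^{\nu})$ whose successive quotients are $\Sch(\Delta(\beta))$. These are the Specht modules $\specht^{\beta}$, and this identification, as well as the universal (base-change-compatible) construction of the Weyl functor and its filtration from~\cite{AB85,ABW}, are independent of $k$. On the other hand, by Theorem~\ref{thm:main}, $\Sch(\Delta(\lambda)\intP\Gamma^{\nu})\simeq \Sch(\Delta(\lambda))\otimes\Sch(\Gamma^{\nu})\simeq \specht^{\lambda}\otimes M^{\nu}$, since $\Sch(\Gamma^{\nu})=M^{\nu}$ is a permutation module (e.g.\ from the exponential/weight-space description or Proposition~\ref{prop:main} applied summand-by-summand). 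This gives the Specht filtration of $\specht^{\lambda}\otimes M^{\nu}$ with the asserted multiplicities.

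For the dual Specht case I would repeat the argument with the isomorphism $\nabla(\lambda')\intP\Gamma^{\nu}$ from Corollary~\ref{cor:koz}, which already provides a $k$-independent dual Weyl filtration. Applying $\Sch$ and using Theorem~\ref{thm:main} once more gives a dual Specht filtration of $\Sch(\nabla(\lambda'))\otimes M^{\nu}$, and $\Sch(\nabla(\lambda'))$ is the dual Specht module $\specht^{\lambda'\,\circ}$ (or $\specht^{\lambda}$ depending on the duality/transpose convention in use, which one would fix once and carry through consistently).

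I do not expect a genuine obstacle here; the only mildly delicate points are bookkeeping rather than mathematics. One must check that $\Sch$ really does carry the specific filtration described in~(\ref{weyl4}) to a filtration whose successive quotients are (dual) Specht modules, i.e.\ that none of the intermediate tensor-product pieces $\Delta(\alpha^{1}/\mu)\otimes\cdots\otimes\Delta(\lambda/\alpha^{n-1})$ are collapsed in a way that destroys the $k$-universality; this follows because those pieces themselves admit universal Weyl filtrations (as recalled in the proof of Proposition~\ref{prop:weyl}) and exactness of $\Sch$ commutes with refinement of filtrations. The second small check is the convention-dependent identification of $\Sch(\Delta)$ and $\Sch(\nabla)$ with Specht and dual Specht modules, which should be pinned down once at the start of the proof.
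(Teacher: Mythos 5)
Your proposal is correct and follows the paper's own route exactly: the paper obtains the corollary by applying the (exact) Schur functor to Proposition~\ref{prop:weyl} and Corollary~\ref{cor:koz}, using Theorem~\ref{thm:main} to identify $\Sch(\Delta(\lambda)\intP\Gamma^{\nu})$ with the Kronecker product $\Sch(\Delta(\lambda))\otimes M^{\nu}$ and the standard facts that $\Sch$ sends (dual) Weyl functors to (dual) Specht modules and $\Gamma^{\nu}$ to the permutation module $M^{\nu}$. Your write-up is simply a more detailed version of the same argument.
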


All prior results in this section involving $\intP$ have one of the arguments a projective object 
and so stay valid after replacing $\intP$ with $\overset {\bf L}{\intP}$. 
Likewise, $\wedge^d \overset {\bf L}{\intP} \Delta(\lambda) \simeq \nabla(\lambda^\prime)$~\cite[Proposition 4.16]{Krause13} 
implies in particular that higher derived internal tensor products of $\wedge^{d}$ and a Weyl functor vanish.
We show more generally that, even though $\Delta(\lambda) \intP \Delta(\mu)$ is hard to compute, 
it is in rep$\Gamma^d_k$ and the corresponding higher derived internal tensor products always vanish. 

\begin{proposition}
\label{prop:twoweyl}

Let $\Delta(\lambda)$ and $\Delta(\mu)$ be Weyl functors corresponding to partitions $\lambda$ and $\mu$ of $d$. 
Then $\higheri{i}(\Delta(\lambda)\overset{\bf L}\intP\Delta(\mu))=0$ for $ i \neq 0$ and
$\Delta(\lambda) \intP \Delta(\mu)$ is in {\rm rep}$\Gamma^d_k$.
\end{proposition}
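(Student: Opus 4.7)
The plan is to apply the last clause of Lemma~\ref{lemma:duality}, namely $X \overset{{\bf L}}\intP Y^{\diamond} \simeq {\bf R}\Bbb H(X,Y)^{\diamond}$, with $X = \Delta(\lambda)$ and $Y = \nabla(\mu)$, and then use Proposition~\ref{prop:vanishing} to collapse the right-hand side to degree zero.

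First I would verify the hypothesis of Lemma~\ref{lemma:duality}: $\Delta(\lambda)$ is quasi-isomorphic to a bounded complex of finite direct sums of representable functors. Over $\Bbb Z$, the Schur algebra is integrally quasi-hereditary and $\Delta(\lambda)_{\Bbb Z}$ admits an explicit finite resolution by direct sums of functors of the type $\Gamma^{d,k^{n}}$ (of Akin-Buchsbaum-Weyman type); since each term and $\Delta(\lambda)_{\Bbb Z}$ itself are $\Bbb Z$-free, the resolution base-changes to one over any commutative $k$. Since $\nabla(\mu) \in \mathrm{rep}\,\Gamma^{d}_{k}$, Section~\ref{subsec:duality} gives $\nabla(\mu)^{\diamond} \simeq \nabla(\mu)^{\circ} \simeq \Delta(\mu)$, so Lemma~\ref{lemma:duality} reduces the statement to showing that ${\bf R}\Bbb H(\Delta(\lambda), \nabla(\mu))$ is concentrated in degree zero and lies in $\mathrm{rep}\,\Gamma^{d}_{k}$.

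For the vanishing of higher cohomology, I would use equation~(\ref{rh}): ${\bf R}\Bbb H(\Delta(\lambda), \nabla(\mu))(V) \simeq {\bf R}\hom_{\RepG}(\Delta(\lambda)^{V}, \nabla(\mu))$. An iteration of Akin-Buchsbaum branching (exactly as in the derivation of~(\ref{eqn:weyl3})) exhibits $\Delta(\lambda)^{k^{n}}(U) = \Delta(\lambda)(U^{\oplus n})$ as an iterated extension of tensor products of skew Weyl functors; each such tensor product carries a Weyl filtration via Littlewood-Richardson together with the characteristic-free good-filtration theorem for tensor products of Weyl-filtered modules (Donkin, Mathieu, Boffi, van der Kallen). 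Hence $\Delta(\lambda)^{k^{n}}$ is Weyl-filtered. For arbitrary $V \in P_{k}$, $V$ is a summand of some $k^{n}$, so $\Delta(\lambda)^{V}$ is a summand of $\Delta(\lambda)^{k^{n}}$, and Ext-vanishing passes to summands. Combining Proposition~\ref{prop:vanishing} with the long exact Ext sequence along the filtration then yields $\mathrm{Ext}^{i}_{\RepG}(\Delta(\lambda)^{V}, \nabla(\mu)) = 0$ for all $i > 0$, so ${\bf R}\Bbb H(\Delta(\lambda), \nabla(\mu))$ is concentrated in degree zero.

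Finally, for the $\mathrm{rep}\,\Gamma^{d}_{k}$ conclusion, by item~\ref{finite-criterion} of Section~\ref{subsec:yoneda} it suffices to check that $\Bbb H(\Delta(\lambda), \nabla(\mu))(k^{d}) = \hom_{\RepG}(\Delta(\lambda)^{k^{d}}, \nabla(\mu))$ lies in $P_{k}$. The Weyl filtration of $\Delta(\lambda)^{k^{d}}$, combined with the $\mathrm{Ext}^{1}$-vanishing just established and the classical fact that $\hom_{\RepG}(\Delta(\alpha), \nabla(\mu))$ is free of rank $\delta_{\alpha, \mu}$ (Carter-Lusztig over $\Bbb Z$, then base-changed), exhibits $\Bbb H(\Delta(\lambda), \nabla(\mu))(k^{d})$ as a finite iterated extension of free $k$-modules of finite rank, hence itself free of finite rank. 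Thus $\Bbb H(\Delta(\lambda), \nabla(\mu)) \in \mathrm{rep}\,\Gamma^{d}_{k}$, so $\diamond$ agrees with $\circ$ on it, and $\Delta(\lambda) \overset{{\bf L}}\intP \Delta(\mu) \simeq \Bbb H(\Delta(\lambda), \nabla(\mu))^{\circ}$ is concentrated in degree zero and in $\mathrm{rep}\,\Gamma^{d}_{k}$, proving both claims. The main obstacle is the characteristic-free good-filtration theorem for tensor products of Weyl functors; once that is available, the rest is a formal unwinding of Lemma~\ref{lemma:duality}, equation~(\ref{rh}), and Proposition~\ref{prop:vanishing}.
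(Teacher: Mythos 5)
Your proposal is correct and follows essentially the same route as the paper: reduce via the last clause of Lemma~\ref{lemma:duality} to ${\bf R}\mathbb{H}(\Delta(\lambda),\nabla(\mu))$, kill the higher cohomology using equation~(\ref{rh}), the Weyl filtration of the parametrized Weyl functor from Proposition~\ref{prop:weyl}, and Proposition~\ref{prop:vanishing}, then check the value at $k^d$ lies in $P_k$. The only (harmless) deviations are that you verify Ext-vanishing at every $V$ rather than just at $k^d$ via the equivalence~\ref{thm:FS}, and you establish finite freeness of $\hom_{\RepG}(\Delta(\lambda)^{k^d},\nabla(\mu))$ by a filtration plus Carter--Lusztig argument where the paper passes through $k=\mathbb{Z}$ and base change.
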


\begin{proof}
We will use the derived duality $(-)^\diamond$ from~\cite[Section 4]{Krause13}, 
recall the discussion of duality in Section~\ref{subsec:duality}. 
We have $\Delta(\mu) \simeq \nabla(\mu)^\diamond$. 
(In the context of derived functors, single objects will be considered as complexes concentrated in degree 0.)
As $\Delta(\lambda)$ has a finite projective resolution by finite direct sums of various $\Gamma^\nu$ by~\cite[Section 4]{AB88}, 
we may use~\cite[Lemma 4.5]{Krause13} (see Lemma~\ref{lemma:duality}) to get 
$$\Delta(\lambda)  \overset{\bf L}\intP \Delta(\mu)  \simeq  {\bf R}\mathbb{H}(\Delta(\lambda),\nabla(\mu))^{\diamond}.$$
It is enough to prove two claims. 
\begin{enumerate} 
\item $\higheri{i}({\bf R}\mathbb{H}(\Delta(\lambda),\nabla(\mu))) = 0$ for $i \neq 0$, which would give
${\bf R}\mathbb{H}(\Delta(\lambda),\nabla(\mu))\simeq \mathbb{H}(\Delta(\lambda),\nabla(\mu))$. 
\item $\mathbb{H}(\Delta(\lambda),\nabla(\mu))$ is in rep$\Gamma^d_k$,  which would give 
$\mathbb{H}(\Delta(\lambda),\nabla(\mu))^{\diamond} \simeq \mathbb{H}(\Delta(\lambda),\nabla(\mu))^{\circ}$. 
\end{enumerate}
To prove the first claim we use the equivalence~\ref{thm:FS} and calculate using the isomorphism~(\ref{rh})
$$\higheri{i}({\bf R}\mathbb{H}(\Delta(\lambda),\nabla(\mu))(k^{d}))\simeq  \text{Ext}^{i}_{\RepG}(\Delta(\lambda)^{k^{d}},\nabla(\mu)).$$
These Ext$^i$ vanish for $i \neq 0$ by Proposition~\ref{prop:vanishing} because, 
by the discussion in Proposition~\ref{prop:weyl}, $\Delta(\lambda)^{k^{d}}$ has a Weyl filtration. 
For the second claim, by item 5 in Section~\ref{subsec:yoneda} it is enough to prove that $\mathbb{H}(\Delta(\lambda),\nabla(\mu))(k^{d})$ is in $P_{k}$.
We have 
\begin{center}
$\mathbb{H}(\Delta(\lambda),\nabla(\mu))(k^{d})\simeq \text{Hom}_{\RepG}(\Delta(\lambda)^{k^{d}},\nabla(\mu))
\simeq \text{Hom}_{S_{k}(d,d)}(\Delta(\lambda)^{k^{d}}(k^{d}),\nabla(\mu)(k^{d})),$
\end{center}
which is seen to be free of finite rank over $k$ as follows. When $k=\mathbb{Z}$ it is a subgroup of 
$\text{Hom}_\Bbb Z(\Delta(\lambda)^{\Bbb Z^{d}}(\Bbb Z^{d}),\nabla(\mu)({\Bbb Z^{d}}))$, which is a free abelian group of finite rank.
Now change base to $k$ \cite[Theorem 5.3]{AB88} and use vanishing of Ext$^1$ resulting from Proposition~\ref{prop:vanishing}.
The two claims together prove the result. 
 \end{proof}

\begin{remark}
Proposition~\ref{prop:twoweyl} is not true for dual Weyl functors. 
For example, let $k$ be of characteristic 2. Applying  $(- \, \intP \sym^{2})$ to the projective resolution
$0\rightarrow \Gamma^{2}\rightarrow \otimes^{2}\rightarrow \wedge^{2}\rightarrow 0$
of $\wedge^2$, we get from Proposition~\ref{prop:wedgesym} the sequence $0\rightarrow \sym^{2}\rightarrow \otimes^{2}\rightarrow \sym^{2} \rightarrow 0$.
It follows that $\higheri{-1}(\wedge^{2}\, \overset{\bf L} \intP\sym^{2})  \simeq I^{(1)},$ 
the Frobenius twist of the identity functor.
\end{remark} 

We record yet another corollary, one that requires 2 to be a unit in $k$. 
In that case we have $\wedge^d \intP \sym^d \simeq \wedge^d$ by Proposition~\ref{prop:wedgesym} and therefore 
$$\nabla(\lambda)\intP \sym^{\nu} \simeq (\Delta(\lambda')\intP \wedge^d ) \intP ( \sym^d \intP \Gamma^{\nu} )  \simeq 
\Delta(\lambda') \intP \wedge^d  \intP \Gamma^{\nu} \simeq \nabla(\lambda)\intP \Gamma^{\nu}.$$
As $\nabla(\lambda)\intP \Gamma^{\nu}$ has a dual Weyl filtration by Corollary~\ref{cor:koz}, we get

\begin{corollary}\label{cor:nablasym}
If 2 is a unit in $k$, then $\nabla(\lambda)\intP \sym^{\nu}$ has a dual Weyl filtration in which 
the multiplicity of any dual Weyl functor can be calculated as a sum of products of Littlewood-Richardson coefficients.
\end{corollary}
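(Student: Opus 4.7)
The plan is to reduce the statement to Corollary~\ref{cor:koz} by a short manipulation inside the closed symmetric monoidal structure $(\RepG, \intP)$, using the special behavior of $\wedge^d \intP \sym^d$ when $2$ is invertible.

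First I would record the three key ingredients. (a) Proposition~\ref{prop:wedgesym} shows that when $2$ is a unit in $k$, $\wedge^d \intP \sym^d \simeq \wedge^d$. (b) Koszul duality gives $\Delta(\lambda')\intP \wedge^d \simeq \nabla(\lambda)$ (this is the isomorphism $\wedge^d \overset{\bf L}\intP \Delta(\lambda') \simeq \nabla(\lambda)$ used in the proof of Corollary~\ref{cor:koz}, which is actually an ordinary $\intP$ since $\Delta(\lambda')$ is acyclic for $\wedge^d \intP -$). (c) The parametrization identity of Lemma~\ref{lm:paramet} gives $\sym^d \intP \Gamma^\nu \simeq (\sym^d)^\nu \simeq \sym^\nu$.

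Next I would chain these together. Using symmetry and associativity of $\intP$, compute
\[
\nabla(\lambda)\intP \sym^\nu \simeq \bigl(\Delta(\lambda') \intP \wedge^d\bigr) \intP \bigl(\sym^d \intP \Gamma^\nu\bigr) \simeq \Delta(\lambda') \intP (\wedge^d \intP \sym^d) \intP \Gamma^\nu \simeq \Delta(\lambda') \intP \wedge^d \intP \Gamma^\nu \simeq \nabla(\lambda)\intP \Gamma^\nu,
\]
where the third isomorphism is ingredient (a) and the last is ingredient (b). This is a purely formal computation, so no real obstacle appears here.

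Finally, Corollary~\ref{cor:koz} asserts precisely that $\nabla(\lambda)\intP \Gamma^\nu$ carries a dual Weyl filtration, universal in $k$, whose multiplicities are sums of products of Littlewood--Richardson coefficients; transporting this filtration across the isomorphism above yields the statement. The only step that deserves any care is the invocation of ingredient (a): one must make sure that the hypothesis ``$2$ invertible'' propagates through the chain, but since that hypothesis is placed up front and ingredients (b) and (c) hold over any $k$, there is no obstruction. Thus the proof is essentially a two-line reduction once the three ingredients are assembled, and the ``hard part'' was really done earlier in establishing Proposition~\ref{prop:wedgesym} and Corollary~\ref{cor:koz}.
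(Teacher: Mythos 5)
Your proposal is correct and is essentially the paper's own argument: the paper proves Corollary~\ref{cor:nablasym} by the same chain $\nabla(\lambda)\intP \sym^{\nu} \simeq (\Delta(\lambda')\intP \wedge^d)\intP(\sym^d \intP \Gamma^{\nu}) \simeq \nabla(\lambda)\intP\Gamma^{\nu}$, using $\wedge^d\intP\sym^d\simeq\wedge^d$ from Proposition~\ref{prop:wedgesym} and then invoking Corollary~\ref{cor:koz}. Your extra remarks (acyclicity justifying the underived Koszul isomorphism, and Lemma~\ref{lm:paramet} for $\sym^d\intP\Gamma^\nu\simeq\sym^\nu$) only make explicit what the paper leaves implicit.
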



\subsection{Kronecker multiplicities for $S_d$  via the internal tensor product} 
\label{subsec:kron}

For this section, let $k$ be a field of characteristic 0. 
Now $\Sch$ is an equivalence of semisimple monoidal categories. 
The Kronecker problem for the symmetric group asks for a good description of 
multiplicities of Specht modules in the tensor product of two Specht modules. 
Via $\Sch$, this is equivalent to decomposing $\Delta(\lambda) \intP \Delta(\mu)$, 
where $\lambda$ and $\mu$ are partitions of $d$.
One can do this, e.g., by combining the Jacobi-Trudi formula to express $\Delta(\mu)$ as an alternating sum of 
various $\Gamma^\nu$ and then using ~\ref{prop:weyl} to calculate each $\Delta(\lambda) \intP \Gamma^\nu$.
Such an algorithm involves cancellations and its ingredients translate into standard facts about the internal product of symmetric functions. 
Even so, in two special cases (namely when $\mu$ is either a two-row partition or a hook) 
we show below that one can devise a reasonably simple procedure in terms of $\intP$.
In case of a hook, this uses a signed version of polynomial functors defined by Axtell~\cite{Axt13}.

\begin{example} 
We calculate $\Delta (\lambda)\intP\Delta((a,b))$, 
where $a\geq b$ are positive integers with $a+b = d$. 
\begin{eqnarray*}
\Delta(\lambda) \intP \Gamma^{(a,b)}  & \simeq & 
\Delta(\lambda)_{(a,b)}^{k^{2}}  \simeq \underset{\mu \subset \lambda, |\mu| = a}\bigoplus \Delta(\mu) \otimes \Delta(\lambda/\mu) \\
& \simeq & \underset{\mu \subset \lambda, |\mu| = a}\bigoplus \Delta(\mu) \otimes \bigg( \underset {\nu \subset \lambda, |\nu| = b} 
\bigoplus c^{\lambda}_{\mu,\nu} \Delta(\nu) \bigg) \\
& \simeq &  \underset{|\mu|=a,|\nu|=b,|\alpha|=d}\bigoplus c^{\lambda}_{\mu,\nu} \ c^{\alpha}_{\mu,\nu} \Delta(\alpha).
\end{eqnarray*}
We also have $\Gamma^{(a,b)} \simeq \Gamma^{(a+1,b-1)}\oplus \Delta((a,b))$ by, e.g., Pieri's formula.
Apply $-\intP \Delta(\lambda)$ and use the above calculation to get
\begin{center}
$\Delta(\lambda) \intP \Delta({(a,b))} \simeq 
\underset{\alpha}\bigoplus \bigg( \underset{|\mu|=a,|\nu|=b} \Sigma c^{\lambda}_{\mu,\nu} \ c^{\alpha}_{\mu,\nu} -
\underset{|\bar \mu|=a+1,|\bar \nu|=b-1} \Sigma c^{\lambda}_{\bar\mu,\bar\nu} \ c^{\alpha}_{\bar\mu,\bar\nu} \bigg)
\Delta(\alpha).$
\end{center}
As a special case consider $b=1$ and let $c=$ the number of outer corners of $\lambda$. 
Now $\Gamma^{(a+1,b-1)}= \Gamma^d$, which is the identity for $\intP$. We get 
\begin{center}
$\Delta(\lambda) \intP \Delta((a,1)) \simeq (c-1) \Delta(\lambda) \bigoplus \underset{\alpha}\bigoplus \Delta(\alpha),$ 
\end{center}
where $\alpha$ ranges over partitions obtained by moving exactly one box in the Young diagram of $\lambda$ elsewhere. 
Note that if we apply the Schur functor to this, we get the well known formula for 
the Kronecker product of a Specht module with the standard module.

\end{example}

\begin{example} 
We calculate $\Delta(\lambda)\intP\Delta((p,1^{q}))$ where $p, q$ are positive integers with $d = p+q$. 
Straightforward imitation of earlier procedure would require us to involve $\Gamma^\nu$ where $\nu$ has several parts. 
Instead we use polynomial functors whose arguments are super-vector spaces (see~\cite{Axt13}). 
In this language $\Gamma^p (V) \otimes \wedge^q (V) \simeq \Gamma^{d,k^+ \oplus k^-}_{(p,q)}(V)$. 
On the right hand side, the  parametrization by $k^+ \oplus k^-$ and taking the $(p,q)$ weight space amounts to 
requiring $p$ letters from the argument $V$ to commute and remaining $q$ letters to anticommute. 
Similarly, using obvious terminology, 
$\Delta_+(\mu)(V) = \Delta(\mu)(V_+) = \Delta(\mu)(V)$ whereas $\Delta_-(\mu)(V) = \Delta(\mu)(V_-) \simeq \nabla(\mu')(V)$, 
which is $\Delta(\mu')(V)$ for $k$ a field of characteristic 0. 
Proceeding as before via a super-analogue of the relevant filtration, we have
\begin{eqnarray*}
\Delta(\lambda) \intP \bigg(\Gamma^p \otimes \wedge^q \bigg) & \simeq & 
\Delta(\lambda)^{k^+ \oplus k^-}_{(p,q)}
\simeq \underset{\mu \subset \lambda, |\mu| = p}\bigoplus \Delta_+(\mu) \otimes \Delta_-(\lambda/\mu) \\
& \simeq & \underset{\mu \subset \lambda, |\mu| = p}\bigoplus \Delta_+(\mu) \otimes \Delta_+(\lambda'/\mu') \\
& \simeq & \underset{\mu \subset \lambda, |\mu| = p}\bigoplus \Delta(\mu) \otimes \bigg( \underset {\nu \subset \lambda, |\nu| = q} \bigoplus c^{\lambda'}_{\mu',\nu} \Delta(\nu) \bigg) \\
& \simeq &  \underset{|\mu|=p,|\nu|=q,|\alpha|=d}\bigoplus c^{\lambda'}_{\mu',\nu} \ c^{\alpha}_{\mu,\nu} \Delta(\alpha).
\end{eqnarray*}
Now again by Pieri's rule, $\Gamma^p \otimes \wedge^q \simeq \Delta((p,1^{q})) \oplus \Delta((p+1,1^{q-1}))$. 
This allows one to calculate $\Delta(\lambda) \intP\Delta(p,1^{q})$ as the alternating sum of 
$\Delta(\lambda) \intP \big(\Gamma^{p+i} \otimes \wedge^{q-i} \big)$ with $i = 0, \ldots, q$.

\end{example}


\end{document}